\theoremstyle{definition}
\newtheorem{Def}{Definition}[section]
\newtheorem{rem}[Def]{Remark}
\newtheorem{ex}[Def]{Example}
\theoremstyle{plain}
\newtheorem{prop}[Def]{Proposition}
\newtheorem{thm}[Def]{Theorem}
\newcommand{\LLS}{{\mathfrak S}}
\def\i<#1>{\langle #1 \rangle}
\def\l<#1>{\left\langle #1 \right\rangle}
\newcommand{\calI}{\mathcal{I}}
\newcommand{\calC}{\mathcal{C}}
\newcommand{\sgn}{\mathrm{sgn}}
\def\keywords{\xdef\@thefnmark{}\@footnotetext}
\begin{document}

\title[]{On a generalization of Monge--Amp\`ere equations and Monge--Amp\`ere systems}

%% Group authors per affiliation:
\author[Masahiro Kawamata]{Masahiro Kawamata}
\author[Kazuhiro Shibuya]{Kazuhiro Shibuya}

%% or include affiliations in footnotes:

\address[Masahiro Kawamata]{Graduate School of Science, Hiroshima University, 1-3-1 Kagamiyama, Higashi-Hiroshima, 739-8526, Japan}
\address[Kazuhiro Shibuya]{Graduate School of Advanced Science and Engineering, Hiroshima University, 1-3-1 Kagamiyama, Higashi-Hiroshima, 739-8526, Japan}

\keywords{2010 \emph{Mathematics Subject Classification.} Primary 58A15; Secondary 58A17.}%
\keywords{\emph{Key words and phrases.}Monge--Amp\`ere equation; Monge--Amp\`ere system; exterior differential system; jet space}%
    
\maketitle

%\ead[Masahiro Kawamata]{aaa}

\begin{abstract}
In the present paper we discuss Monge-Amp\`ere equations from the view point of differential geometry.
It is known that a Monge--Amp\`ere equation corresponds to a special exterior differential system on a 1-jet space. In this paper, we generalize Monge--Amp\`ere equations and prove that a $(k+1)$st order generalized Monge--Amp\`ere equation corresponds to a special exterior differential system on a $k$-jet space and that its solution naturally corresponds to an integral manifold of the corresponding exterior differential system. Moreover, we show that the Korteweg-de Vries (KdV) equation and the Cauchy--Riemann equations are examples of our equations.

\noindent

\end{abstract}
%\linenumbers

\section{Introduction}
Monge--Amp\`ere equations are given by the following second order partial differential equations:
\begin{flalign*}
		\begin{array}{c}
		Az_{xx}+2Bz_{xy}+Cz_{yy}+D+E(z_{xx}z_{yy}-z_{xy}^{2})=0,
		\end{array}
	\end{flalign*}
where $z=z(x,y)$ is an unknown function and $A,B,C,D$ and $E$ are given functions of $x,y,z,z_{x}$ and $z_{y}$.
These equations contain many important examples (e.g. the wave equation, the heat equation and Laplace's equation), and they have been studied not only by analytical, but also by geometrical methods (see for instance the work of V. V. Lychagin, V. N. Rubtsov, I. V. Chekalov and T. Morimoto which have been used the geometrical theory of differential equations (\cite{LRC,Morimoto95})). On the other hand, various generalizations of Monge--Amp\`ere equations are known. For example, G. Boillat defines a generalization of Monge--Amp\`ere equations from the view point of symplectic geometry. These equations are called the symplectic Monge--Amp\`ere equations which are partial differential equations with $n$ independent variables (\cite{Boillat}).
Another example is the most general nonlinear third order equation which is completely exceptional, so called the third order Monge--Amp\`ere equations obtained by \cite{donate-valenti}.

A geometrical theory of differential equations was developed by \'E. Cartan, \'E. Goursat, etc., to deal with construction of solutions, symmetry of differential equations or classification of differential equations. 
In this theory, differential equations are studied by using the following correspondence: 
a $k$th order differential equation corresponds to a manifold in a $k$-jet space, and its solution corresponds to an integral manifold of an exterior differential system associated to the $k$-jet space (called the canonical system on the $k$-jet space). 
This theory applies to various differential equations. For example, Monge--Amp\`ere equations, first order partial differential equations with one unknown function and Cartan's overdetermined system have been studied by using this theory (\cite{LRC,Morimoto95,Cartan,Hermann}).

According to the geometrical theory of differential equations, a Monge--Amp\`ere equation (which is a second order partial differential equation) corresponds to a manifold in a 2-jet space, and its solution corresponds to an integral manifold of the canonical system on the 2-jet space. On the other hand, T. Morimoto defined a special exterior differential system on a 1-jet space (called Monge--Amp\`ere systems) for each Monge--Amp\`ere equation (\cite{Morimoto95}). Moreover, Morimoto proved that a solution of a Monge--Amp\`ere equation naturally corresponds to an integral manifold of a Monge--Amp\`ere system associated with the equation. 
In other words, the set of all Monge--Amp\`ere equations is a class of partial differential equations which can be studied by means of an exterior differential system on a 1-jet space.

Recently, Monge--Amp\`ere equations have been studied by means of Monge--Amp\`ere systems (\cite{bryant,bryant2,Ishikawa-Morimoto}).

In the present paper, we generalize the symplectic Monge--Amp\`ere equation and the third order Monge--Amp\`ere equation, and call such equations generalized Monge--Amp\`ere equations (GMAE). Moreover, we show that the Korteweg-de Vries (KdV) equation and the Cauchy-Riemann equations are examples of GMAE. We also generalize the Monge--Amp\`ere systems and have the following correspondence GMAE with the generalized Monge--Amp\`ere system (GMAS):

\begin{thm}\label{thm:corresponding}
	For an arbitrary GMAS on a $k$-jet space $J^{k}(n,m)$, there exists a $(k+1)$st order GMAE such that an integral manifold of the GMAS corresponds to a solution of the GMAE. Namely, the following correspondence holds:
	\begin{flalign}\label{intmfd-solution}
		\{\mbox{integral manifolds of the GMAS}\} \leftrightarrow \{\mbox{solutions of the GMAE}\}.
	\end{flalign}
	Conversely, for an arbitrary $(k+1)$st order GMAE, there exists a GMAS on $J^{k}(n,m)$ such that the correspondence (\ref{intmfd-solution}) holds.
\end{thm}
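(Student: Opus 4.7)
The plan is to mimic Morimoto's original argument for ordinary Monge--Amp\`ere equations ($k=1$) and lift it to general $k$. On $J^{k}(n,m)$ with canonical coordinates $(x^{i},z^{\alpha},z^{\alpha}_{I})$ ($|I|\le k$), the canonical contact system is generated by the 1-forms $\omega^{\alpha}_{I}=dz^{\alpha}_{I}-\sum_{i}z^{\alpha}_{I+e_{i}}\,dx^{i}$ with $|I|<k$. For any $n$-dimensional integral manifold $N$ of this system that projects submersively onto the $x$-coordinates, vanishing of the $\omega^{\alpha}_{I}$ forces $z^{\alpha}_{I}=\DD^{I}f^{\alpha}$ along $N$ for some map $f=(f^{\alpha})$, so $N=j^{k}f$. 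This is the mechanism that lets forms on a $k$-jet space record data about $f$ up to order $k$.

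For the forward direction, assume the GMAS is the exterior differential system $\calI$ generated by the canonical contact forms together with a prescribed collection of $n$-forms $\Omega_{1},\dots,\Omega_{r}$ on $J^{k}(n,m)$. Let $N=j^{k}f$ be any integral manifold. Pulling back each $\Omega_{\ell}$ by $j^{k}f$, one obtains
\begin{flalign*}
(j^{k}f)^{*}\Omega_{\ell}=F_{\ell}\bigl(x,f,\DD f,\dots,\DD^{k+1}f\bigr)\,dx^{1}\wedge\cdots\wedge dx^{n}.
\end{flalign*}
The appearance of $(k+1)$st order derivatives is exactly because, along $j^{k}f$, the differentials $dz^{\alpha}_{I}$ with $|I|=k$ pull back to $\sum_{i}\DD^{I+e_{i}}f^{\alpha}\,dx^{i}$, and $\DD^{I+e_{i}}f^{\alpha}$ is no longer a coordinate on $J^{k}$. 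Declaring $F_{\ell}=0$ for every $\ell$ defines the $(k+1)$st order system of PDEs that one takes as the GMAE associated to the GMAS; conversely, any solution $f$ produces an integral manifold by taking its $k$-jet graph, yielding the bijection (\ref{intmfd-solution}).

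For the converse, given a $(k+1)$st order GMAE, the task is to realise the defining expression as the coefficient of $dx^{1}\wedge\cdots\wedge dx^{n}$ in $(j^{k}f)^{*}\Omega$ for a choice of $n$-forms $\Omega$ on $J^{k}(n,m)$. The key algebraic fact to exploit is that every $n$-form on $J^{k}(n,m)$ is, modulo the contact ideal, a polynomial expression in the $dz^{\alpha}_{I}$ ($|I|=k$) with coefficients in the functions of $(x^{i},z^{\alpha},z^{\alpha}_{J})$. Under the pullback by $j^{k}f$ these monomials become the corresponding polynomial combinations of $\DD^{I+e_{i}}f^{\alpha}$, which is precisely the structural form that a GMAE (as a generalization of the symplectic and third-order Monge--Amp\`ere equations) is defined to have. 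Thus one reads off the coefficients of the equation and packages them into a finite family of $n$-forms generating the required GMAS.

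The main obstacle I expect is the converse direction: one must verify that the class of expressions obtainable as $(j^{k}f)^{*}\Omega$ modulo the contact ideal matches exactly the class of expressions permitted by the authors' definition of a $(k+1)$st order GMAE, and that the resulting correspondence is independent of representative choices (since two $n$-forms differing by an element of the contact ideal give identical PDEs). Once this matching is established, the bijection between solutions and integral manifolds follows automatically from the jet-graph characterization of integral manifolds developed in the forward direction.
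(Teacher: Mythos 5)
Your overall mechanism --- pull the generating forms back along the $k$-jet graph, expand each $dp^{\alpha}_{I}$ with $I\in S_{k}$ as $\sum_{i}z^{\alpha}_{Ii}\,dx_{i}$, and read off the coefficient functions as the PDE --- is exactly the computation the paper performs in Propositions~\ref{GMAS-GMAE}--\ref{GMAE-GMAS2}. However, there is a genuine gap: you assume throughout that the non-contact generators of the GMAS are $n$-forms, so that each generator contributes a single scalar equation (the coefficient of $dx_{1}\wedge\cdots\wedge dx_{n}$). The paper's GMAS allows the generators $\Psi_{\mu}$ to be homogeneous forms of arbitrary degree $l_{\mu}$. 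Degrees $l_{\mu}>n$ can indeed be discarded, since such forms pull back to zero on any $n$-dimensional submanifold (Proposition~\ref{prop:not-vanished-GMAS2}), but degrees $l_{\mu}<n$ are an essential part of the theorem's scope: for such a generator, $\iota^{*}\Psi_{\mu}$ is an $l_{\mu}$-form on $S$, and its vanishing is equivalent to the vanishing of all $\binom{n}{l_{\mu}}$ coefficients of $dx_{\nu_{1}}\wedge\cdots\wedge dx_{\nu_{l_{\mu}}}$, i.e.\ an overdetermined system whose equations share the same coefficient functions. This is precisely the parameter $l$ in Definition~\ref{Def:GMAE}, and it produces GMAEs such as the pair $A^{1}z_{x_{1}x_{1}}+A^{2}z_{x_{2}x_{1}}+A_{1}=0$, $A^{1}z_{x_{1}x_{2}}+A^{2}z_{x_{2}x_{2}}+A_{2}=0$ coming from a single $1$-form on $J^{1}(2,1)$ (Example~\ref{ex:overdetermined}). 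As written, your argument proves neither direction of the theorem for these GMASs and GMAEs.

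Second, in the converse direction you correctly isolate the crux --- that the expressions arising as coefficients of $(j^{k}f)^{*}\Omega$ must coincide with the class of admissible GMAE left-hand sides --- but you leave it unverified, and it is exactly this bookkeeping that constitutes the paper's proof. A GMAE is \emph{defined} (Definition~\ref{Def:GMAE}) as a linear combination of minors of the matrix $M(n,m;k)$ of $(k+1)$st order derivatives, equipped with sign factors $\sgn$ of the permutation sending $(i_{1},\ldots,i_{\lambda_{0}},j^{\alpha_{1}}_{1},\ldots,j^{\alpha_{t}}_{\lambda_{t}})$ to $(\nu_{1},\ldots,\nu_{l})$; these signs are chosen precisely so that each summand $A^{\cdots}_{\cdots}\,\Delta^{\cdots}_{\cdots}$ is the coefficient of $dx_{\nu_{1}}\wedge\cdots\wedge dx_{\nu_{l}}$ produced by pulling back the monomial $dx_{i_{1}}\wedge\cdots\wedge dx_{i_{\lambda_{0}}}\wedge dp^{\alpha_{1}}_{I^{\alpha_{1}}_{1}}\wedge\cdots\wedge dp^{\alpha_{t}}_{I^{\alpha_{t}}_{\lambda_{t}}}$ and reordering the resulting $dx$'s. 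Using the normal form of Remark~\ref{rem:expression-Psi}, the assignment between the coefficients of the equation and the coefficients of $\Psi$ modulo $\calC^{k}$ is then a bijection on the nose, which settles both your ``matching'' worry and the independence of the correspondence from the choice of representative modulo the contact ideal. Without carrying out this sign and index bookkeeping, the claimed correspondence is asserted rather than proved.
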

		By this theorem, the set of all $(k+1)$st order GMAEs is a class of partial differential equations which can be studied by means of an exterior differential system on a $k$-jet space.

Our paper is organized as follows. In Section 2, we recall the basics of exterior differential system and jet spaces. In Section 3, we give a generalization of Monge--Amp\`ere equations, Monge--Amp\`ere systems and a relationship between generalized Monge--Amp\`ere equations and generalized Monge--Amp\`ere systems. In Section 4, we describe some examples of generalized Monge--Amp\`ere equations and generalized Monge--Amp\`ere systems.

\section{Preliminaries}

In this section, we recall the foundations of exterior differential systems.  Hereafter, we assume all objects are of class $C^{\infty}$.
Let $\Omega^{\ast}(M)$ and $\Omega^{i}(M)$ be the set of all differential forms and all differential $i$-forms, respectively, on a manifold $M$. For the details of this section, see \cite{BCG3,Cfb}.

\begin{Def}
	A subset $\calI$ of $\Omega^{\ast}(M)$ is called \textit{an exterior differential system} (\textit{EDS} for short) on a manifold $M$, if $\calI$ satisfies the following conditions: 
	\begin{enumerate}
  		\item The subset $\calI$ is a homogeneous ideal of $\Omega^{\ast}(M)$, that is, $\calI$ is written by the following form:
		\[
			\calI = \bigoplus^{\infty}_{i=0}(\calI \cap \Omega^{i}(M)).
		\]
  		\item The subset $\calI$ is closed under exterior differentiation on $M$.
 	\end{enumerate}
\end{Def}

	Let $\{\omega_{1},\ldots,\omega_{r}\}_{\mathrm{diff}}$ denote the ideal generated by homogeneous differential forms 
$\omega_{1},\ldots,\omega_{r}$ $\in \Omega^{\ast}(M)$%$$
\ and its exterior derivatives $d\omega_{1},\ldots,d\omega_{r}$. The ideal $\{\omega_{1},\ldots,$ $\omega_{r}\}_{\mathrm{diff}}$ is an EDS on $M$, and called \textit{the EDS generated by $\omega_{1},\ldots,\omega_{r}$}.

We next recall the jet spaces.  Remark that Darboux's theorem for contact manifolds also holds good for jet spaces (\cite{Yamaguchi82,Yamaguchi83}), in other words, jet spaces have local normal form.
Therefore, in this paper, we define jet spaces locally as follows.

Let $M_{k}$ be the set of all $k$-tuples of integers $1,\ldots,n$. We define an equivalence relation $\sim$ on $M_{k}$ by saying that 
$(i_{1},\ldots,i_{k}) \sim (j_{1},\ldots,j_{k})$, if there exists $\sigma \in \LLS_{k}$ such that $(i_{\sigma(1)},\ldots,i_{\sigma(k)}) = (j_{1},\ldots,j_{k})$, where $\mathfrak{S}_{k}$ is the symmetric group of degree $k$. Now we denote $S_{k} := M_{k}/{\sim}$ and put $\Sigma_{k}:= \bigsqcup^{k}_{j=1} S_{j}$.
Note that $\# S_{k}={}_{n}\mathrm{H}_{k}$, $\# \Sigma_{k}=\sum^{k}_{j=1} {}_{n}\mathrm{H}_{j}$.
Here ${}_{n}\mathrm{H}_{j}$ is the number of combinations with repetition used in \cite{Yamaguchi82,Yamaguchi83}.
For $I=(i_{1},\ldots,i_{k}) \in S_{k}$, by $Ii$, $p_{I}$, and $p_{Ii}$ we denote $(i_{1},\ldots,i_{k},i) \in S_{k+1}$, $p_{i_{1}\cdots i_{k}}$ and $p_{i_{1}\cdots i_{k}i}$, respectively.   

\begin{rem}
	We assume that the symbols $I$ and $Ii$ are elements of $S_{k}$ or $\Sigma_{k}$, not elements of $M_{k}$.
\end{rem}

\begin{Def}
	Let $M$ be a manifold of dimension $(n+m+m\sum^{k}_{j=1}{}_{n}\mathrm{H}_{j})$ and $\calI$ an EDS on $M$. The pair $(M,\calI)$ is called \textit{a $k$-jet space of type (n,m)}, if for any $p \in M$, there exists a local coordinate system $(U; x_{i},z^{\alpha},p^{\alpha}_{I})\ (1 \leq i \leq n,\  1 \leq \alpha \leq m,\ I \in \Sigma_{k})$ around $p$ such that $\calI$ is written by the following form on $U$:
\[
	\calI=\{\omega_{0}^{\alpha},\omega^{\alpha}_{I}\ |1 \leq \alpha \leq m,\ I \in \Sigma_{k-1}\}_{\mathrm{diff}},
\]
where
\[
	\omega^{\alpha}_{0} := dz^{\alpha} - \sum^{n}_{i=1}p^{\alpha}_{i}dx_{i},\ \omega^{\alpha}_{I} := dp^{\alpha}_{I} - \sum^{n}_{i=1}p^{\alpha}_{Ii}dx_{i}
	\hspace{2mm} (1 \leq \alpha \leq m,\ I \in \Sigma_{k-1}).
\]
We write $\calC^{k}:=\calI$ and $(J^{k}(n,m),\calC^{k}) := (M,\calI)$. 
The EDS $\calC^{k}$ is called \textit{the canonical system on $J^{k}(n,m)$} and $(U; x_{i},z^{\alpha},p^{\alpha}_{I})$ are called \textit{a canonical coordinate system around $p$}.
\end{Def}
	
\begin{rem}
	If $k=0$, then a $0$-jet space of type $(n,m)$ is simply a manifold of dimension $(n+m)$. 
	Namely, we define the canonical system $\calC^{0}$ on $J^{0}(n,m)$to be the trivial EDS $\{0\}$.
\end{rem}

We next define contact transformations on a jet space and an integral manifold of an EDS.

	\begin{Def}
		Let $\varphi \colon J^{k}(n,m) \to J^{k}(n,m)$ be a diffeomorphism. Then $\varphi$ is called \textit{a contact transformation (of the $k$th order)} if $\varphi$ satisfies $\varphi^{\ast}(\calC^{k}) = \calC^{k}$.
	\end{Def}

\begin{Def}
	Let $M$ be a manifold and $\calI$ be an EDS on $M$. 
	Then a submanifold $\iota \colon N \hookrightarrow M$ is called \textit{an integral manifold} of $\calI$ if $\iota^{\ast}\omega =0$ for any $\omega \in \calI$. 
\end{Def}

\begin{rem}\label{rem:canonical-system}
	Let $(x_{i},z^{\alpha},p^{\alpha}_{I})\ (1 \leq i \leq n,1 \leq \alpha \leq m,I \in \Sigma_{k})$ be a canonical coordinate system of $J^{k}(n,m)$. Then a submanifold 
	$S=\{(x_{i},z^{\alpha}(x_{1},\ldots,x_{n}),p^{\alpha}_{I}(x_{1}$ $,\ldots,x_{n}))\ |\ 1 \leq i \leq n,1 \leq \alpha \leq m,I \in \Sigma_{k}\}$ in $J^{k}(n,m)$%$$
	\ is an integral manifold of $\calC^{k}$ if and only if, for any $l \in \{1,\ldots,k\}$, $\alpha \in \{1, \ldots ,m\}$ and $I=(i_{1},\ldots,i_{l}) \in S_{l}$, the following holds:
	\[
		p^{\alpha}_{I}(x_{1},\ldots,x_{n})=z^{\alpha}_{x_{i_{1}}\cdots x_{i_{l}}}(x_{1},\ldots,x_{n}).
	\]
\end{rem}

\section{A generalization of Monge--Amp\`ere systems and Monge--Amp\`ere equations}

In this section, we generalize Monge--Amp\`ere systems and Monge--Amp\`ere equations, and give the proof of Theorem~\ref{thm:corresponding}.

\subsection{Generalized Monge--Amp\`ere system}

First of all, we review the Monge--Amp\`ere system defined by T. Morimoto.

\begin{Def}[\cite{Morimoto95}]
	For a jet space $(J^{1}(n,1),\calC^{1})$, let $\omega$ be a generator 1-form of $\calC^{1}$, that is, $\calC^{1}=\{\omega\}_{\mathrm{diff}}$. An EDS $\calI$ on $J^{1}(n,1)$ is called \textit{a Monge--Amp\`ere system} (\textit{MAS} for short), if there exists $\Psi \in \Omega^{n}(J^{1}(n,1))$ such that $\calI$ is generated by $\omega$ and $\Psi$.
\end{Def}

\begin{rem}\label{not-vanished-MAS}
	If $\Psi \equiv 0 \mod \calC^{1}$, then we have $\calI=\{ \omega,\Psi \}_{\mathrm{diff}} = \{\omega\}_{\mathrm{diff}} = \calC^{1}$. In general, we assume $\Psi \not \equiv 0 \mod \calC^{1}$.
\end{rem}

We next define a generalized Monge--Amp\`ere system.

\begin{Def}
	Let $\calC^{k}=\{\omega^{\alpha}_{0},\omega^{\alpha}_{I}\ |\ 1 \leq \alpha \leq m,\  I \in \Sigma_{k-1}\}_{\mathrm{diff}}$ be the canonical system on $J^{k}(n,m)$. 
	An EDS $\calI$ on $J^{k}(n,m)$ is called \textit{a generalized Monge--Amp\`ere system} (\textit{GMAS} for short), if there exist homogeneous differential forms $\Psi_{1},\ldots,\Psi_{r}$ on $J^{k}(n,m)$ such that $\calI$ is generated  by $\omega^{\alpha}_{0},\omega^{\alpha}_{I}$ and $\Psi_{\mu}\ (1 \leq \mu \leq r)$. 
	\end{Def}

Hereafter, we denote $\{\calC^{k},\Psi_{\mu}\ |\ 1 \leq \mu \leq r\}_{\mathrm{diff}}:=\{\omega^{\alpha}_{0},\omega^{\alpha}_{I}, \Psi_{\mu}\ |\ 1 \leq \alpha \leq m,\ I \in \Sigma_{k-1},\ 1 \leq \mu \leq r\}_{\mathrm{diff}}$.

\begin{rem}\label{rem:expression-Psi}
	Let $(x_i,z^\alpha,p^\alpha_{I})\ (1 \leq i \leq n,1 \leq \alpha \leq m, I \in \Sigma_{k})$ be a canonical coordinate system of $J^{k}(n,m)$. Since $\{dx_{i},\omega^{\alpha}_{0},\omega^{\alpha}_{J}, dp^{\alpha}_{I}\ |\ 1 \leq i \leq n,\ 1 \leq \alpha \leq m,\ J \in \Sigma_{k-1},\ I \in S_{k}\}$ is a basis of $\Omega^{1}(J^{k}(n,m))$, the generators $\Psi_{\mu} \in \Omega^{l_{\mu}}(J^{k}(n,m))\ (1 \leq \mu \leq r)$ of the GMAS are expressed as follows:
		\begin{flalign*}
		\Psi_{\mu} &=\mbox{($\omega$'s terms)} +\sum^m_{t=0} \sum^{l}_{\lambda_{0}=0} \sum_{\lambda_1+ \cdots + \lambda_t=l_{\mu}-\lambda_{0}} \sum_{\alpha_1 < \cdots < \alpha_t}\sum_{i_1 < \cdots < i_{\lambda_0}}  \sum_{\substack{I^{\alpha_1}_1 < \cdots < I^{\alpha_1}_{\lambda_1} \\ \cdots \\ I^{\alpha_t}_1 < \cdots < I^{\alpha_t}_{\lambda_t} \\ I^{\alpha_{i}}_{j_{i}} \in S_{k}}}\\
		&\hspace{10mm}A^{I^{\alpha_1}_1 I^{\alpha_1}_2 \ldots I^{\alpha_t}_{\lambda_t}}_{i_1 \ldots i_{\lambda_0}} dx_{i_1} \wedge \cdots \wedge dx_{i_{\lambda_0}} \wedge dp^{\alpha_1}_{I^{\alpha_1}_1} \wedge dp^{\alpha_1}_{I^{\alpha_1}_2} \wedge \cdots \wedge dp^{\alpha_t}_{I^{\alpha_t}_{\lambda_t}}\\
		&\equiv \sum^m_{t=0} \sum^{l}_{\lambda_{0}=0} \sum_{\lambda_1+ \cdots + \lambda_t=l_{\mu}-\lambda_{0}} \sum_{\alpha_1 < \cdots < \alpha_t}\sum_{i_1 < \cdots < i_{\lambda_0}}  \sum_{\substack{I^{\alpha_1}_1 < \cdots < I^{\alpha_1}_{\lambda_1} \\ \cdots \\ I^{\alpha_t}_1 < \cdots < I^{\alpha_t}_{\lambda_t} \\ I^{\alpha_{i}}_{j_{i}} \in S_{k}}}\\
		&\hspace{10mm}A^{I^{\alpha_1}_1 I^{\alpha_1}_2 \ldots I^{\alpha_t}_{\lambda_t}}_{i_1 \ldots i_{\lambda_0}} dx_{i_1} \wedge \cdots \wedge dx_{i_{\lambda_0}} \wedge dp^{\alpha_1}_{I^{\alpha_1}_1} \wedge dp^{\alpha_1}_{I^{\alpha_1}_2} \wedge \cdots \wedge dp^{\alpha_t}_{I^{\alpha_t}_{\lambda_t}} \mod \calC^{k}
			\end{flalign*}
	where $A^{I^{\alpha_1}_1 I^{\alpha_1}_2 \ldots I^{\alpha_t}_{\lambda_t}}_{i_1 \ldots i_{\lambda_0}}$ are functions of $x_{i},z^{\alpha}$ and $p^{\alpha}_{I}$ $(1 \leq i \leq n,\ 1 \leq \alpha \leq m,\ I \in \Sigma_{k})$.
\end{rem}

\begin{rem}\label{rem1}
   			If $n=l, k=1,m=1$ and $r=1$, then the GMAS coincides with an MAS. 
\end{rem}

Finally, we define isomorphisms of GMASs.

\begin{Def}
	Let $\calI_{1}$ and $\calI_{2}$ be GMASs on $J^{k}(n,m)$. Then a diffeomorphism $\varphi \colon J^{k}(n,m) \to J^{k}(n,m)$ is called \textit{an isomorphism of GMASs} if $\varphi$ satisfies the following conditions:
	\begin{enumerate}
		\item $\varphi^{\ast}(\calI_{1})=\calI_{2}$,
		\item $\varphi^{\ast}(\calC^{k})=\calC^{k},\ \text{(i.e. it is a contact transformation)}$.
	\end{enumerate}
\end{Def}

\subsection{Generalized Monge--Amp\`ere equation}
First of all, we recall that the classical Monge--Amp\`ere equation is a ``sum of minors'' of the following matrix
\[
		\left(
			\begin{array}{cc}
				z_{xx} & z_{yx} \\
				z_{xy} & z_{yy}
			\end{array}
		\right),
\]
where $z=z(x,y)$ is an unknown function.
In fact, for all minors order less than or equal to 2 of the above matrix
	\begin{flalign}\label{minors}
		1,\ z_{xx},\ z_{xy},\ z_{yy},\ z_{xx}z_{yy}-z_{xy}^{2},
	\end{flalign}
we consider a functional coefficients linear combination of (\ref{minors}). Here we define the minor of order 0 by 1. Then we have the classical Monge--Amp\`ere equation
\begin{flalign}\label{MAE}
		\begin{array}{c}
		Az_{xx}+2Bz_{xy}+Cz_{yy}+D+E(z_{xx}z_{yy}-z_{xy}^{2})=0, \\
		\end{array}
\end{flalign}
where $A, B, C, D,$ and $E$ are given functions of $x, y, z, z_{x}$ and $z_{y}$. Generalized Monge-Amp\`ere equations are also defined by partial differential equations whose forms are ``sum of minors''.

Before giving a definition of generalized Monge--Amp\`ere equations, we remark that $S_{k}$ (see Section~2) has the following order. For $I=(i_{1},\ldots,i_{k}), J=(j_{1},\ldots,j_{k})$ $\in S_{k}$, then $I \leq J$ if and only if either $i_{s}=j_{s}$ for all $s \in \{1,\ldots,n\}$ or there exists 
$s \in \{1,\ldots,n\}$ such that $i_{s}<j_{s}$ and $i_{t}=j_{t}$ for all $t<s$. This order is called the lexicographic order. Hereafter, we assume that $S_{k}$ is ordered in this fashion.

On the other hand, for $I=(i_{1},\ldots,i_{k}) \in S_{k}$, let $z^{\alpha}_{I}$ and $z^{\alpha}_{Ii}$ mean $z^{\alpha}_{x_{i_{1}}\ldots x_{i_{k}}}$ and $z^{\alpha}_{Ij}=z^{\alpha}_{x_{i_{1}}\ldots x_{i_{k}}x_{j}}$. Now we define the $n \times (m \cdot {}_{n}\mathrm{H}_{k})$ matrix
\[
	M(n,m;k):=\left(
			\begin{array}{ccc}
				(z^1_{I1})_{I \in S_k} & \cdots & (z^m_{I1})_{I \in S_k}\\
				\vdots &  & \vdots \\
				(z^1_{In})_{I \in S_k} & \cdots & (z^m_{In})_{I \in S_k}\\
			\end{array}
		\right).
\]
Here, $(z^{\alpha}_{Ij})_{I \in S_{k}}$ is a row vector 
whose components $z^{\alpha}_{Ij}$ are arranged from the left to right with respect to the above order of $S_{k}$.
Remark that the components of $M(n,m;k)$ are $(k+1)$st order partial derivatives of the functions $z^{\alpha}$, not vector functions $(z^{\alpha}_{Ij})_{I \in S_{k}}$.
\begin{ex}
	If $n=2, m=1$ and $k=1$, then we have $S_{1}=\{1,2\}$ and 
	$(z_{I1})_{I \in S_{1}}=(z_{x_{1}x_{1}},z_{x_{2}x_{1}}),\ (z_{I2})_{I \in S_{1}}=(z_{x_{1}x_{2}},z_{x_{2}x_{2}}).$
	Hence  
	\begin{flalign}\label{matrix1}
		M(2,1;1)=\left(
			\begin{array}{cc}
				z_{x_{1}x_{1}} & z_{x_{2}x_{1}} \\
				z_{x_{1}x_{2}} & z_{x_{2}x_{2}}\\
			\end{array}
		\right).
	\end{flalign}
\end{ex}

Now we define a generalized Monge--Amp\`ere equation.

\begin{Def}\label{Def:GMAE}
	 For natural numbers $n,m,k$ and $l\ (1 \leq l \leq n)$, a system of partial differential equation obtained by the following procedure is called \textit{a generalized Monge--Amp\`ere equation} (\textit{GMAE} for short): 

\begin{enumerate}[\bfseries {Step} 1]
		\setlength{\parskip}{0mm}
		\setlength{\itemsep}{3mm}
		\item We choose $\nu_{1}\mbox{th row},\ldots,\nu_{l}\mbox{th row}\ (1 \leq \nu_{1}<\cdots<\nu_{l} \leq n)$ from the $n \times (m\cdot {}_{n}\mathrm{H}_{k})$ matrix $M(n,m;k)$.
		\item We consider all minors with order less than or equal to $l$ of the chosen matrix in Step 1. Then these minors are written by the following form: 
	\begin{flalign} \label{minor}
			\begin{array}{l}\displaystyle
					H^{I^{\alpha_1}_1 \ldots I^{\alpha_1}_{\lambda_1} \ldots I^{\alpha_t}_1 \ldots I^{\alpha_t}_{\lambda_t}}_{j^{\alpha_1}_1 \ldots j^{\alpha_1}_{\lambda_1} \ldots  j^{\alpha_t}_1 \ldots j^{\alpha_t}_{\lambda_t} }(z^{\alpha_{1}},\ldots,z^{\alpha_{t}})\\
					\displaystyle
					:=\left|
					\begin{array}{ccccccc}
						z^{\alpha_1}_{I^{\alpha_1}_1j^{\alpha_1}_1} & \cdots & z^{\alpha_1}_{I^{\alpha_1}_{\lambda_1}j^{\alpha_1}_1} & &z^{\alpha_t}_{I^{\alpha_t}_{1}j^{\alpha_1}_1}& \cdots & z^{\alpha_t}_{I^{\alpha_t}_{\lambda_t}j^{\alpha_1}_1}\\
						\vdots & & \vdots &\cdots & \vdots& & \vdots \\
						z^{\alpha_1}_{I^{\alpha_1}_1j^{\alpha_t}_{\lambda_t}} & \cdots & z^{\alpha_1}_{I^{\alpha_1}_{\lambda_1}j^{\alpha_t}_{\lambda_t}} & &z^{\alpha_t}_{I^{\alpha_t}_{\lambda_t}j^{\alpha_t}_{\lambda_t}} &\cdots & z^{\alpha_t}_{I^{\alpha_t}_{1}j^{\alpha_t}_{\lambda_t}}\\
					\end{array}
					\right|.
				\end{array}
	\end{flalign}
	Here, $t$ is the number of unknown functions that appears in (\ref{minor}), $\lambda_{i}\ (1 \leq i \leq  t)$ are the number of partial derivatives of $z^{\alpha_{i}}$ that appears in (\ref{minor}) and $\lambda_{0}$ is the number of rows that not selected among $\nu_{1}$th row, $\ldots$, $\nu_{l}$th row. Then $\sum^{t}_{r=1}\lambda_{r}=l-\lambda_{0}$ holds.
		\item We put
		\[
			\{i_{1},\ldots,i_{\lambda_{0}}\} := \{\nu_{1},\ldots,\nu_{l}\} \setminus \{j^{\alpha_{1}}_{1},j^{\alpha_{1}}_{2},\ldots, j^{\alpha_{t}}_{\lambda_{t}}\} \hspace{2mm} (1 \leq i_{1} < \cdots < i_{\lambda_{0}} \leq n),
		\]
		 and consider a linear combination of (\ref{minor}) with functional coefficients
			\[
				\sgn
					\left(
						\begin{array}{c}
							i_1,\ldots, i_{\lambda_0}, j^{\alpha_1}_1, j^{\alpha_1}_2, \ldots , j^{\alpha_t}_{\lambda_{t}} \\
							\nu_1 ,\ldots, \nu_{\lambda_0},\nu_{\lambda_0+1},\nu_{\lambda_0+2} ,\ldots, \nu_{l} 
						\end{array}
					\right)
					A^{I^{\alpha_1}_1 I^{\alpha_1}_2 \ldots I^{\alpha_t}_{\lambda_t}}_{i_1 \ldots i_{\lambda_0}},
			\]
			where $A^{I^{\alpha_1}_1 I^{\alpha_1}_2 \ldots I^{\alpha_t}_{\lambda_t}}_{i_1 \ldots i_{\lambda_0}}$ are functions of $x_{i},z^{\alpha}$ and $p^{\alpha}_{I}\ (1 \leq i \leq n, 1 \leq \alpha \leq m, I \in \Sigma_{k})$. Then we obtain the following formula:
	\begin{flalign}
		\begin{array}{l}\label{GMAE0}
			\displaystyle
			\sum^m_{t=1} \sum^{l}_{\lambda_{0}=0}\sum_{\lambda_1+ \cdots + \lambda_t=l-\lambda_{0}}  \sum_{\alpha_1 < \cdots < \alpha_t} \sum_{\substack{i_1 < \cdots < i_{\lambda_0} \\ j^{\alpha_1}_1 <j^{\alpha_1}_2< \cdots < j^{\alpha_t}_{\lambda_t}  \\  \{ i_1, \ldots , j^{\alpha_t}_{\lambda_t} \} = \{ \nu_1 ,\ldots, \nu_l \}}}\sum_{\substack{I^{\alpha_1}_1 < \cdots < I^{\alpha_1}_{\lambda_1} \\ \cdots \\ I^{\alpha_t}_1 < \cdots < I^{\alpha_t}_{\lambda_t}}} \\
		\hspace{10mm}
		A^{I^{\alpha_1}_1 I^{\alpha_1}_2 \ldots I^{\alpha_t}_{\lambda_t}}_{i_1 \ldots i_{\lambda_0} } 
\Delta^{I^{\alpha_1}_1 \ldots I^{\alpha_1}_{\lambda_1} \ldots I^{\alpha_t}_1 \ldots I^{\alpha_t}_{\lambda_t}}_{j^{\alpha_1}_1 \ldots j^{\alpha_1}_{\lambda_1} \ldots  j^{\alpha_t}_1 \ldots j^{\alpha_t}_{\lambda_t} }(\nu_1,\ldots,\nu_l\ ; z^{\alpha_1}, \ldots, z^{\alpha_t}),
		\end{array}
	\end{flalign}
	where 
	{\footnotesize
	\begin{flalign*}
		&\Delta^{I^{\alpha_1}_1 \ldots I^{\alpha_1}_{\lambda_1} \ldots I^{\alpha_t}_1 \ldots I^{\alpha_t}_{\lambda_t}}_{j^{\alpha_1}_1 \ldots j^{\alpha_1}_{\lambda_1} \ldots  j^{\alpha_t}_1 \ldots j^{\alpha_t}_{\lambda_t} }(\nu_1,\ldots,\nu_l\ ; z^{\alpha_1}, \ldots, z^{\alpha_t})\\
		&:=\sgn
	\left(
		\begin{array}{c}
			i_1,\ldots, i_{\lambda_0}, j^{\alpha_1}_1, j^{\alpha_1}_2, \ldots , j^{\alpha_t}_{\lambda_{t}} \\
			\nu_1 ,\ldots, \nu_{\lambda_0},\nu_{\lambda_0+1},\nu_{\lambda_0+2} ,\ldots, \nu_{l} 
		\end{array}
	\right)
H^{I^{\alpha_1}_1 \ldots I^{\alpha_1}_{\lambda_1} \ldots I^{\alpha_t}_1 \ldots I^{\alpha_t}_{\lambda_t}}_{j^{\alpha_1}_1 \ldots j^{\alpha_1}_{\lambda_1} \ldots  j^{\alpha_t}_1 \ldots j^{\alpha_t}_{\lambda_t} }(z^{\alpha_{1}},\ldots,z^{\alpha_{t}}).
	\end{flalign*}
	}
	We will consider the single partial differential equation given by formula (\ref{GMAE0}) put equal to zero.
	\renewcommand{\arraystretch}{1}
		\item Finally, we repeat the above Step 1 to Step 3 for all cases by choosing rows from $M(n,m;k)$ and gather together these equations. We obtain the following system of partial differential equations:
	\begin{flalign}
		\begin{array}{l}\label{GMAE2-Step4}
			\displaystyle
			\sum^m_{t=1} \sum^{l}_{\lambda_{0}=0}\sum_{\lambda_1+ \cdots + \lambda_t=l-\lambda_{0}}  \sum_{\alpha_1 < \cdots < \alpha_t} \sum_{\substack{i_1 < \cdots < i_{\lambda_0} \\ j^{\alpha_1}_1 <j^{\alpha_1}_2< \cdots < j^{\alpha_t}_{\lambda_t}  \\  \{ i_1, \ldots , j^{\alpha_t}_{\lambda_t} \} = \{ \nu_1 ,\ldots, \nu_l \}}}\sum_{\substack{I^{\alpha_1}_1 < \cdots < I^{\alpha_1}_{\lambda_1} \\ \cdots \\ I^{\alpha_t}_1 < \cdots < I^{\alpha_t}_{\lambda_t}}} \\
		\hspace{2mm}
	A^{I^{\alpha_1}_1 I^{\alpha_1}_2 \ldots I^{\alpha_t}_{\lambda_t}}_{i_1 \ldots i_{\lambda_0} } 
\Delta^{I^{\alpha_1}_1 \ldots I^{\alpha_1}_{\lambda_1} \ldots I^{\alpha_t}_1 \ldots I^{\alpha_t}_{\lambda_t}}_{j^{\alpha_1}_1 \ldots j^{\alpha_1}_{\lambda_1} \ldots  j^{\alpha_t}_1 \ldots j^{\alpha_t}_{\lambda_t} }(\nu_1,\ldots,\nu_l\ ; z^{\alpha_1}, \ldots, z^{\alpha_t})		 =0 \\
		 \hspace{55mm}(1 \leq \nu_{1}<\cdots<\nu_{l} \leq n).
		\end{array}
	\end{flalign}
	\renewcommand{\arraystretch}{1}
	\end{enumerate}
\end{Def}

\begin{rem}
	A GMAE is a system of $(k+1)$st order partial differential equations with $n$ independent variables and $m$ unknown functions. Then the number of equations of the GMAE obtained from $M(n,m;k)$ is the combination $\left(
		\begin{array}{c}
			n \\
			l
		\end{array}
	\right)$.
\end{rem}

\begin{Def}\label{def:GMAE-general}
	Let $n,m$ and $k$ be natural numbers. Then, for a natural number $l\ (1 \leq l \leq n)$, we denote a GMAE in Definition~\ref{Def:GMAE} by $F(n,m,k,l)=0$. Let $r, l_{\mu}\ (1 \leq \mu \leq r,\ 1 \leq l_{\mu} \leq n)$ be natural numbers and $F_{\mu}(n,m,k,l_{\mu})=0\ (1 \leq \mu \leq r)$ be GMAEs. A system of ``systems of partial differential equations'' $F_{1}(n,m,k,l_{1})=0,\ldots,F_{r}(n,m,k,l_{r})=0$ is also called a generalized Monge-Amp\`ere equation.
\end{Def}

\begin{rem}
	For a natural number $n,m,k,r$ and $l_{\mu}\ (1 \leq \mu \leq r,\ 1 \leq l_{\mu} \leq n)$, the number of equations of a GMAE in Definition~\ref{def:GMAE-general} is
		$\sum^{r}_{\mu =1}
		\left(
		\begin{array}{c}
			n \\
			l_{\mu}
		\end{array}
	\right)$ (see Example~\ref{ex:system of GMAE}).
\end{rem}

\begin{ex}\label{ex:overdetermined}
		 We consider a GMAE of the case $n=2,m=1,k=1,r=1$ and $l=1$ from Definition~\ref{def:GMAE-general}. Then we have $2 \times (1 \cdot {}_{2}\mathrm{H}_{1})$ matrix
	\begin{flalign}\label{GMAE-r-1}
		M(2,1;1)=\left(
			\begin{array}{cc}
				z_{x_{1}x_{1}} & z_{x_{2}x_{1}} \\
				z_{x_{1}x_{2}} & z_{x_{2}x_{2}}\\
			\end{array}
		\right).
	\end{flalign}
	\begin{enumerate}[\bfseries {Step} 1]
		\item Now we pick up the first row in $M(2,1;1)$:
		\[
			(z_{x_{1}x_{1}},z_{x_{2}x_{1}}).
		\]
		\item We next consider minors with order less than or equal to 1 in the matrix picked up in Step 1, that is
		\[
			1,\ z_{x_{1}x_{1}},\ z_{x_{2}x_{1}}.
		\]
		\item In this case, since $\lambda_{0}$ equal to $0$ or $1$, then $\{i_{1},\ldots,i_{\lambda_{0}}\}$ is equal to $\varnothing$ or $\{1\}$.
		Hence we have the following single partial differential equation:
		\begin{flalign}\label{ex:eq1}
			A^{1}z_{x_{1}x_{1}}+A^{2}z_{x_{2}x_{1}}+A_{1}=0,
		\end{flalign}
		where $A_{1},A^{1}$ and $A^{2}$ are functions of $x_{1},x_{2},z,p_{1}$ and $p_{2}$.
		\item The same procedure is performed for the second row in $M(2,1;1)$. Then we have
		\begin{flalign}\label{ex:eq2}
			A^{1}z_{x_{1}x_{2}}+A^{2}z_{x_{2}x_{2}}+A_{2}=0,
		\end{flalign}
	where $A_{2}$ is a function of $x_{1},x_{2},z,p_{1}$ and $p_{2}$.
	Finally, we gather together the equations (\ref{ex:eq1}) and  (\ref{ex:eq2}) obtaining the following overdetermined system of partial differential equations:
	\begin{flalign}\label{ex:eq3}
		\begin{array}{c}
		\left\{
			\begin{array}{l}
				A^{1}z_{x_{1}x_{1}}+A^{2}z_{x_{2}x_{1}}+A_{1}=0 \\
				A^{1}z_{x_{1}x_{2}}+A^{2}z_{x_{2}x_{2}}+A_{2}=0
			\end{array}
		\right. .
		\end{array}
	\end{flalign}
	The equation (\ref{ex:eq3}) is the GMAE of the case $n=2,m=1,k=1$ and $l=1$.
	\end{enumerate}
\end{ex}

\begin{ex}\label{ex:system of GMAE}
	We next consider a GMAE of the case $n=2,m=1,k=1,r=2,l_{1}=1$ and $l_{2}=1$ from Definition~\ref{def:GMAE-general}. By arranging of two GMAEs obtained by Example~\ref{ex:overdetermined}, the GMAE in this case is the following form: 
	\begin{flalign*}
		\left\{
			\begin{array}{l}
				A^{1}z_{x_{1}x_{1}}+A^{2}z_{x_{2}x_{1}}+A_{1}=0 \\
				A^{1}z_{x_{1}x_{2}}+A^{2}z_{x_{2}x_{2}}+A_{2}=0\\
				B^{1}z_{x_{1}x_{1}}+B^{2}z_{x_{2}x_{1}}+B_{1}=0 \\
				B^{1}z_{x_{1}x_{2}}+B^{2}z_{x_{2}x_{2}}+B_{2}=0
			\end{array}
		\right.,
	\end{flalign*}
	where $A^{i},A_{i},B^{i}$ and $B_{i}\ (1 \leq i \leq 2)$ are functions of $x_{1},x_{2},z,p_{1}$ and $p_{2}$.
\end{ex}

\subsection{Proof of Theorem~\ref{thm:corresponding}}
In this subsection, we prove Theorem~\ref{thm:corresponding} which is the main theorem in this paper.

Let $\Psi$ be an $l$-form on a manifold $M$ and $S$ be an $n$-dimensional submanifold in $M\ (\iota \colon S \hookrightarrow M)$. If $l > n$, then $\iota^{\ast}\Psi =0$. Hence we have the following proposition.

\begin{prop}\label{prop:not-vanished-GMAS2}
	Let $(J^{k}(n,m),\calC^{k})$ be a $k$-jet space of type $(n,m)$, 
	$(x_i,z^\alpha,p^\alpha_{I})$ $(1 \leq i \leq n,1 \leq \alpha \leq m, I \in \Sigma_{k})$%$$の位置を調整してる
	 be a canonical coordinate system of $J^{k}(n,m)$ and $S =\{(x_i, z^\alpha (x_1,\ldots,x_n),z^\alpha_I(x_1,\ldots,x_n))\ |\ 1\leq i \leq n,\ 1 \leq \alpha \leq m, I \in \Sigma_{k}\}$ be a submanifold in $J^{k}(n,m)$. For a differential $l_{\mu}$-form $\Psi_{\mu}$ on $J^{k}(n,m)\ (1 \leq \mu \leq r)$, if there exists $\mu_{0} \in \{1, \ldots, r\}$ such that $l_{\mu_{0}} > n$, then $S$ is an integral manifold of the GMAS $\{\calC^{k},\Psi_{\mu}\ |\ 1 \leq \mu \leq r\}_{\mathrm{diff}}$ if and only if $S$ is an integral manifold of the GMAS $\{\calC^{k},\Psi_{\mu}\ |\ 1 \leq \mu \leq  r,\ \mu \ne \mu_{0}\}_{\mathrm{diff}}$.
\end{prop}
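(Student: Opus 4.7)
The proof essentially follows from the dimension-counting observation stated just before the proposition: any differential form of degree strictly greater than $n$ pulls back to zero on an $n$-dimensional submanifold. The plan is to leverage this to show that the generator $\Psi_{\mu_0}$ is ``invisible'' to $S$.

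\textbf{Approach.} Denote the two GMASs by
\[
\calI_1 = \{\calC^{k},\Psi_{\mu}\ |\ 1 \leq \mu \leq r\}_{\mathrm{diff}}, \qquad \calI_2 = \{\calC^{k},\Psi_{\mu}\ |\ 1 \leq \mu \leq r,\ \mu \ne \mu_{0}\}_{\mathrm{diff}}.
\]
One direction is immediate: since $\calI_2 \subset \calI_1$, any integral manifold of $\calI_1$ is automatically an integral manifold of $\calI_2$. The substantive direction is the converse.

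\textbf{Key steps for the converse.} Assume $S$ is an integral manifold of $\calI_2$. By the definition of the EDS generated by a finite set of homogeneous forms, an arbitrary element $\omega \in \calI_1$ is a finite sum of terms of the form $\beta \wedge \gamma$, where $\beta \in \Omega^{\ast}(J^{k}(n,m))$ and $\gamma$ ranges over the generators of $\calI_1$ and their exterior derivatives, namely
\[
\omega^{\alpha}_{0},\ \omega^{\alpha}_{I},\ d\omega^{\alpha}_{0},\ d\omega^{\alpha}_{I},\ \Psi_{\mu},\ d\Psi_{\mu} \qquad (1 \le \alpha \le m,\ I \in \Sigma_{k-1},\ 1 \le \mu \le r).
\]
Split the sum into two groups: (i) terms in which $\gamma$ does not involve $\Psi_{\mu_0}$ or $d\Psi_{\mu_0}$, and (ii) terms in which $\gamma$ is $\Psi_{\mu_0}$ or $d\Psi_{\mu_0}$.

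Every term in group (i) lies in $\calI_2$, so it pulls back to zero on $S$ by hypothesis. For group (ii), each term has the form $\beta \wedge \Psi_{\mu_0}$ or $\beta \wedge d\Psi_{\mu_0}$; these have degree at least $l_{\mu_0}$ and $l_{\mu_0}+1$ respectively. Since $l_{\mu_0} > n = \dim S$, each such term has degree $> n$ and therefore pulls back to zero on $S$ for the purely dimensional reason recalled before the statement. Summing, $\iota^{\ast}\omega = 0$ for every $\omega \in \calI_1$, so $S$ is an integral manifold of $\calI_1$.

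\textbf{Anticipated difficulty.} There is no real obstacle here; the argument is just bookkeeping on the generators of the differential ideal and using $\dim S = n$. The only point requiring mild care is the observation that we must also handle $d\Psi_{\mu_0}$ (not just $\Psi_{\mu_0}$), but its degree $l_{\mu_0}+1$ is even larger than $l_{\mu_0}$, so the same dimensional vanishing applies.
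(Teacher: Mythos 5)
Your proof is correct and follows essentially the same route as the paper, which simply records the observation that an $l$-form with $l>n$ pulls back to zero on an $n$-dimensional submanifold and immediately deduces the proposition. Your additional bookkeeping---decomposing a general element of $\calI_1$ into terms involving the generators and their exterior derivatives, and noting that the terms containing $\Psi_{\mu_0}$ or $d\Psi_{\mu_0}$ vanish on $S$ for degree reasons---is exactly the implicit content of the paper's one-line argument, made explicit.
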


By Proposition~\ref{prop:not-vanished-GMAS2}, we assume that $l_{\mu} \leq n$ for any $\mu \in \{1,\ldots,r\}$ in this paper. 

Now we prove Theorem~\ref{thm:corresponding}. The proof is divided into three cases, $m=r=1$, $r=1$ and the general case. First of all, we prove Theorem~\ref{thm:corresponding} in the case $m=r=1$. In this case, Theorem~\ref{thm:corresponding} holds good from the following Proposition~\ref{GMAS-GMAE} and Proposition~\ref{GMAE-GMAS}.

\begin{prop}[the case of $m=r=1$]\label{GMAS-GMAE}
	Let $(x_i,z,p_{I})\ (1 \leq i \leq n, I \in \Sigma_{k})$ be a canonical coordinate system of $J^{k}(n,1)$ and $\ \calI=\{\calC^{k},\Psi\}_{\mathrm{diff}}$ be a GMAS on $J^{k}(n,m)$. 
	By Remark~\ref{rem:expression-Psi}, we express $\Psi \in \Omega^{l}(J^{k}(n,1))\ (1 \leq l \leq n)$ as
	\[
		\Psi \equiv \sum^l_{\lambda=0}\sum_{i_1 < \ldots <i_\lambda} \sum_{\substack{I_1 < \ldots < I_{l-\lambda} \\ I_j \in S_k}}A^{I_1 \ldots I_{l - \lambda}}_{i_1 \ldots i_\lambda } dx_{i_1}\wedge \cdots \wedge dx_{i_\lambda} \wedge dp_{I_1} \wedge \cdots \wedge dp_{I_{l-\lambda}} \mod \calC^{k}.
	\]
	Then the submanifold $S$ of $J^{k}(n,1)$ given by
	\[
		S=\{(x_i, z(x_1,\ldots,x_n),z_I(x_1,\ldots,x_n))\ |\ 1\leq i \leq n,\ I \in \Sigma_{k}\}
	\]
	is an integral manifold of $\ \calI$ if and only if $z(x_1,\ldots,x_n)$ is a solution of the following GMAE:	
	\[
		\begin{array}{l}
		\displaystyle\sum_{\lambda=0}^{l} \sum_{\substack{i_1 < \cdots < i_\lambda \\ j_1 < \cdots <j_{l-\lambda}  \\ \{i_1,\ldots,j_{l-\lambda}\} = \{\nu_1,\ldots, \nu_l\}}} \sum_{\substack{I_1 < \cdots < I_{l-\lambda} \\ I_j \in S_k}} A^{I_1 \ldots I_{l-\lambda}}_{i_1 \ldots i_\lambda} \Delta_{j_1 \ldots j_{l-\lambda}}^{I_1 \ldots I_{l-\lambda}} (\nu_1 ,\ldots, \nu_l\ ; z) =0\\
		\hspace{70mm} (1 \leq \nu_1 < \cdots < \nu_l \leq n).
		\end{array}
	\]
\end{prop}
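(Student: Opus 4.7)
The plan is to pull back $\Psi$ along the inclusion $\iota\colon S\hookrightarrow J^{k}(n,1)$ and identify the coefficients of the resulting top-degree form on $S$ with the left-hand sides of the GMAE.

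First I would observe that, by Remark~\ref{rem:canonical-system}, the parametrization $p_{I}=z_{I}(x_{1},\ldots,x_{n})$ of $S$ forces $\iota^{\ast}\omega_{0}=0$ and $\iota^{\ast}\omega_{I}=0$ for every $I\in\Sigma_{k-1}$, so $S$ is automatically an integral manifold of $\calC^{k}$. Consequently $S$ is an integral manifold of $\calI=\{\calC^{k},\Psi\}_{\mathrm{diff}}$ if and only if $\iota^{\ast}\Psi=0$ (the conditions on the exterior derivatives follow automatically, and the ``mod $\calC^{k}$'' ambiguity in the expression for $\Psi$ is harmless since the omitted $\omega$-terms pull back to zero on $S$).

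Next I would substitute $dp_{I}\mapsto\sum_{j=1}^{n}z_{Ij}\,dx_{j}$ on $S$ and expand each wedge product $dp_{I_{1}}\wedge\cdots\wedge dp_{I_{l-\lambda}}$. Antisymmetrization groups the result as
\begin{equation*}
\iota^{\ast}(dp_{I_{1}}\wedge\cdots\wedge dp_{I_{l-\lambda}})=\sum_{j_{1}<\cdots<j_{l-\lambda}}\det(z_{I_{a}j_{b}})_{a,b}\,dx_{j_{1}}\wedge\cdots\wedge dx_{j_{l-\lambda}},
\end{equation*}
so the minor $H^{I_{1}\ldots I_{l-\lambda}}_{j_{1}\ldots j_{l-\lambda}}(z)$ of Definition~\ref{Def:GMAE} appears naturally as the coefficient. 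Wedging back with $dx_{i_{1}}\wedge\cdots\wedge dx_{i_{\lambda}}$ annihilates every term whose $j$-indices meet the $i$-indices, leaving only those with $\{i_{1},\ldots,i_{\lambda}\}\sqcup\{j_{1},\ldots,j_{l-\lambda}\}=\{\nu_{1},\ldots,\nu_{l}\}$ for some increasing sequence $\nu_{1}<\cdots<\nu_{l}$; reordering $dx_{i_{1}}\wedge\cdots\wedge dx_{i_{\lambda}}\wedge dx_{j_{1}}\wedge\cdots\wedge dx_{j_{l-\lambda}}$ into $dx_{\nu_{1}}\wedge\cdots\wedge dx_{\nu_{l}}$ produces exactly the permutation sign appearing in Definition~\ref{Def:GMAE}.

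Finally, collecting coefficients of each basis form $dx_{\nu_{1}}\wedge\cdots\wedge dx_{\nu_{l}}$ in $\iota^{\ast}\Psi$ and invoking the linear independence of these $\binom{n}{l}$ forms on $S$, the vanishing $\iota^{\ast}\Psi=0$ becomes equivalent to one scalar equation per multi-index $(\nu_{1},\ldots,\nu_{l})$, each of which is precisely the equation $F(n,1,k,l)=0$ for that multi-index as written in the statement. The main obstacle is purely notational: carefully matching the sign conventions and the grouping of $A$-coefficients coming from the antisymmetrization to the $\Delta^{\cdots}_{\cdots}(\nu_{1},\ldots,\nu_{l};z)$ notation of Definition~\ref{Def:GMAE}, since nothing genuinely analytic or geometric happens beyond the determinant identity displayed above.
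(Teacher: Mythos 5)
Your proposal is correct and follows essentially the same route as the paper's proof: pull back $\Psi$ to $S$ (noting $S$ is automatically an integral manifold of $\calC^{k}$ so the $\omega$-terms vanish), substitute $dp_{I}\mapsto\sum_{j}z_{Ij}\,dx_{j}$, group the expansion into the minors $H^{I_{1}\ldots I_{l-\lambda}}_{j_{1}\ldots j_{l-\lambda}}(z)$, reorder to extract the sign defining $\Delta$, and conclude by linear independence of the forms $dx_{\nu_{1}}\wedge\cdots\wedge dx_{\nu_{l}}$, with the converse obtained by substituting a solution back into the same computation. No gaps.
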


\begin{proof}
	Let $\iota \colon S \hookrightarrow J^{k}(n,1)$ be an $n$-dimensional integral manifold of $\calI$ such that $x_{1},\ldots,x_{n}$ are coordinates of $S$. Then $S$ is an integral manifold of $\calC^{k}$, and we have $p_{I}(x_{1},\ldots,x_{n})=z_{I}(x_{1},\ldots,x_{n})\ ( I \in \Sigma_{k})$ by Remark~\ref{rem:canonical-system}. Therefore
	\begin{flalign}
		\iota^\ast \Psi 
		&= \sum^l_{\lambda=0}\sum_{i_1 < \ldots <i_\lambda} \sum_{\substack{I_1 < \ldots < I_{l-\lambda} \\ I_j \in S_k}}A^{I_1 \ldots I_{l - \lambda}}_{i_1 \ldots i_\lambda} dx_{i_1}\wedge \cdots dx_{i_\lambda} \wedge dz_{I_1} \wedge \cdots \wedge dz_{I_{l-\lambda}} \notag \\
		& \begin{array}{l}\displaystyle
			= \sum^l_{\lambda=0}\sum_{i_1 < \ldots <i_\lambda} \sum_{\substack{I_1 < \ldots < I_{l-\lambda} \\ I_j \in S_k}}\sum_{j_1, \ldots , j_{l-\lambda}}\\
		\hspace{20mm}A^{I_1 \ldots I_{l - \lambda}}_{i_1 \ldots i_\lambda }z_{I_1j_1}\cdots z_{I_{l-\lambda}j_{l-\lambda}}  dx_{i_1}\wedge \cdots dx_{i_\lambda} \wedge dx_{j_1} \wedge \cdots \wedge dx_{j_{l-\lambda}}
		\end{array}\notag \\
		& \begin{array}{l}
			\displaystyle
			=\sum^l_{\lambda=0}
		\sum_{\substack{{i_1 < \ldots <i_\lambda} \\ j_1<\cdots < j_{l-\lambda}}} \sum_{\substack{I_1 < \ldots < I_{l-\lambda} \\ I_j \in S_k}}\\
		\hspace{20mm}A^{I_1 \ldots I_{l - \lambda}}_{i_1 \ldots i_\lambda }H^{I_{1}\ldots I_{l-\lambda}}_{j_{1}\ldots j_{l-\lambda}}(z)
		dx_{i_1}\wedge \cdots dx_{i_\lambda} \wedge dx_{j_1} \wedge \cdots \wedge dx_{j_{l-\lambda}}
		\end{array} \notag\\
		&\label{calc1}\begin{aligned}
		&=\sum_{\nu_1 < \cdots <\nu_l}\sum^l_{\lambda=0}\sum_{\substack{{i_1 < \ldots <i_\lambda} \\ j_1<\cdots < j_{l-\lambda} \\ \{ i_1 ,\ldots,i_\lambda,j_{1},\ldots,j_{l-\lambda} \} = \{   \nu_1 ,\ldots , \nu_l\}}} \sum_{\substack{I_1 < \ldots < I_{l-\lambda} \\ I_j \in S_k}} \\
		&\hspace{20mm}A^{I_1 \ldots I_{l - \lambda}}_{i_1 \ldots i_\lambda}\Delta_{I_1 \ldots I_{l-\lambda}}^{j_1 \ldots j_{l-\lambda}} (\nu_1 ,\ldots, \nu_l\ ; z) dx_{\nu_1}\wedge \cdots \wedge dx_{\nu_l}.
		\end{aligned}
	\end{flalign}
	Furthermore, $dx_{\nu_{1}}\wedge \cdots \wedge dx_{\nu_{l}}\ (1 \leq \nu_{1} < \cdots < \nu_{l} \leq n)$ are linearly independent on $S$ and hence,  by using $\iota^{\ast}\Psi =0$, we obtain
	\[
		\begin{array}{l}
			\displaystyle
			\sum_{\lambda=0}^{l} \sum_{\substack{i_1 < \cdots < i_\lambda \\ j_1 < \cdots <j_{l-\lambda}  \\ \{i_1,\ldots,j_{l-\lambda}\} = \{\nu_1,\ldots, \nu_l\}}} \sum_{\substack{I_1 < \cdots < I_{l-\lambda} \\ I_j \in S_k}} A^{I_1 \ldots I_{l-\lambda}}_{i_1 \ldots i_\lambda} \Delta_{j_1 \ldots j_{l-\lambda}}^{I_1 \ldots I_{l-\lambda}} (\nu_1 ,\ldots, \nu_l\ ; z) =0\\
			\hspace{70mm} (1 \leq \nu_1 < \cdots < \nu_l \leq n).
		\end{array}
	\]
	The converse is proved by substituting a solution of GMAE for $z$ in (\ref{calc1}).
\end{proof}

\begin{prop}[the case of $m=r=1$]\label{GMAE-GMAS}
	For an arbitrary GMAE in the case $m=r=1$, namely
	\begin{flalign}\label{GMAE-r=m=1}
		\hspace{-2mm}\begin{array}{l}
			\displaystyle
			\sum_{\lambda=0}^{l} \sum_{\substack{i_1 < \cdots < i_\lambda \\ j_1 < \cdots <j_{l-\lambda}  \\ \{i_1,\ldots,j_{l-\lambda}\} = \{\nu_1,\ldots, \nu_l\}}} \sum_{\substack{I_1 < \cdots < I_{l-\lambda} \\ I_j \in S_k}} A^{I_1 \ldots I_{l-\lambda}}_{i_1 \ldots i_\lambda} \Delta_{j_1 \ldots j_{l-\lambda}}^{I_1 \ldots I_{l-\lambda}} (\nu_1 ,\ldots, \nu_l\ ; z) =0\\
			\hspace{70mm}(1 \leq \nu_1 < \cdots < \nu_l \leq n), 
		\end{array}
	\end{flalign}
	a function $z(x_{1},\ldots,x_{n})$ is a solution of (\ref{GMAE-r=m=1}) if and only if the submanifold $S$ of $J^{k}(n,1)$ given by
	\[
		S=\{(x_i, z(x_1,\ldots,x_n),z_I(x_1,\ldots,x_n))\ |\ 1\leq i \leq n,\ I \in \Sigma_{k}\} 
	\]
	is an integral manifold of the GMAS generated by the canonical system $\calC^{k}$ on $J^{k}(n,1)$ and the following $l$-form $\Psi$ on $J^{k}(n,1)$:
	\[
		\Psi \equiv \sum^l_{\lambda=0}\sum_{i_1 < \ldots <i_\lambda} \sum_{\substack{I_1 < \ldots < I_{l-\lambda} \\ I_j \in S_k}}A^{I_1 \ldots I_{l - \lambda}}_{i_1 \ldots i_\lambda } dx_{i_1}\wedge \cdots \wedge dx_{i_\lambda} \wedge dp_{I_1} \wedge \cdots \wedge dp_{I_{l-\lambda}} \mod \calC^{k},
	\]
	where
		$(x_i,z,p_{I})\ (1 \leq i \leq n,\ I \in \Sigma_{k})$ be a canonical coordinate system of $J^{k}(n,1)$.
	\end{prop}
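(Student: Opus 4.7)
The plan is to reduce this proposition almost entirely to the calculation performed in the proof of Proposition~\ref{GMAS-GMAE}. The statement is essentially the converse direction of that proposition, repackaged: the previous result asserted that any integral manifold of the GMAS gives rise to a solution of the GMAE, and here I must show that conversely, given a function $z(x_1,\ldots,x_n)$, its canonical prolongation $S=\{(x_i, z(x_1,\ldots,x_n), z_I(x_1,\ldots,x_n))\}$ is an integral manifold of the GMAS exactly when $z$ solves the GMAE.

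First I would verify that, regardless of whether $z$ solves the equation, the submanifold $S$ is always an integral manifold of the canonical system $\calC^k$. This follows immediately from Remark~\ref{rem:canonical-system}, because the $p_I$-coordinates of the points of $S$ are defined by $p_I(x_1,\ldots,x_n)=z_I(x_1,\ldots,x_n)$ for all $I \in \Sigma_k$. Hence $\iota^{\ast}\omega=0$ for every $\omega$ in the differential ideal $\calC^k$, and the question of whether $S$ is an integral manifold of $\{\calC^{k},\Psi\}_{\mathrm{diff}}$ reduces to whether $\iota^{\ast}\Psi=0$ and $\iota^{\ast}(d\Psi)=0$.

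Next I would observe that the computation (\ref{calc1}) in the proof of Proposition~\ref{GMAS-GMAE} was valid for \emph{any} submanifold of the form $S$, without assuming beforehand that $\iota^{\ast}\Psi=0$. Running that same chain of equalities here yields
\[
\iota^\ast \Psi = \sum_{\nu_1<\cdots<\nu_l}\Biggl[\sum_{\lambda=0}^{l}\sum_{\substack{i_1<\cdots<i_\lambda\\ j_1<\cdots<j_{l-\lambda}\\ \{i_1,\ldots,j_{l-\lambda}\}=\{\nu_1,\ldots,\nu_l\}}}\sum_{\substack{I_1<\cdots<I_{l-\lambda}\\ I_j\in S_k}} A^{I_1\ldots I_{l-\lambda}}_{i_1\ldots i_\lambda}\,\Delta_{j_1\ldots j_{l-\lambda}}^{I_1\ldots I_{l-\lambda}}(\nu_1,\ldots,\nu_l\,;z)\Biggr] dx_{\nu_1}\wedge\cdots\wedge dx_{\nu_l}.
\]
Since $x_1,\ldots,x_n$ form a coordinate system on $S$, the $l$-forms $dx_{\nu_1}\wedge\cdots\wedge dx_{\nu_l}$ with $1\le \nu_1<\cdots<\nu_l\le n$ are linearly independent on $S$. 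Therefore $\iota^{\ast}\Psi=0$ if and only if each bracketed coefficient vanishes, and these coefficients are precisely the left-hand sides of the GMAE (\ref{GMAE-r=m=1}). This establishes the equivalence for $\Psi$ itself.

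The one remaining subtlety, which I expect to be the main (though minor) obstacle, is handling the closure condition $\iota^{\ast}(d\Psi)=0$: a priori, the GMAS $\{\calC^k,\Psi\}_{\mathrm{diff}}$ contains $d\Psi$, so being an integral manifold requires $\iota^{\ast}(d\Psi)=0$ as well. However, since $S$ is $n$-dimensional and $d\Psi$ has degree $l+1$ with $l\le n$, we could have $l+1>n$ (in which case $\iota^{\ast}(d\Psi)=0$ automatically) or $l+1\le n$. In the latter case, one uses that $\iota^{\ast}$ commutes with $d$: if $\iota^{\ast}\Psi=0$ on $S$ identified via the coordinates $x_1,\ldots,x_n$, then $\iota^{\ast}(d\Psi)=d(\iota^{\ast}\Psi)=0$ as well. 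Thus the condition on $d\Psi$ is automatic, and the equivalence proved for $\Psi$ above completes the proof.
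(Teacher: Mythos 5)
Your proposal is correct and follows essentially the same route as the paper, whose proof of this proposition is simply the remark that it is ``similar to that of Proposition~\ref{GMAS-GMAE}'': you rerun the pullback computation (\ref{calc1}), note it depends only on the prolonged form of $S$ (so $S$ is automatically an integral manifold of $\calC^{k}$ and $p_{I}=z_{I}$), and read off the equivalence from the linear independence of the $dx_{\nu_1}\wedge\cdots\wedge dx_{\nu_l}$. Your explicit treatment of the closure condition via $\iota^{\ast}d\Psi=d(\iota^{\ast}\Psi)=0$ is a correct (and slightly more careful) addition that the paper leaves implicit.
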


\begin{proof}
	The proof is similar to that of Proposition~\ref{GMAS-GMAE}.
\end{proof}

We next prove Theorem~\ref{thm:corresponding} in the case $r=1$. In this case, Theorem~\ref{thm:corresponding} holds good from the following Proposition~\ref{GMAS-GMAE2} and Proposition~\ref{GMAE-GMAS2}

\begin{prop}[the case of $r=1$]\label{GMAS-GMAE2}
	Let $(x_i,z^{\alpha},p^{\alpha}_{I})\ (1 \leq i \leq n,\ 1 \leq \alpha \leq m,\ I \in \Sigma_{k})$ be a canonical coordinate system of $J^{k}(n,m)$ and $\ \calI=\{\calC^{k},\Psi\}_{\mathrm{diff}}$ be a GMAS on $J^{k}(n,m)$. By Remark~\ref{rem:expression-Psi}, we express $\Psi \in \Omega^{l}(J^{k}(n,m))\ (1 \leq l \leq n)$ as
	\begin{flalign*}
		&\Psi \equiv \sum^m_{t=0} \sum^{l}_{\lambda_{0}=0} \sum_{\lambda_1+ \cdots + \lambda_t=l-\lambda_{0}} \sum_{\alpha_1 < \cdots < \alpha_t}\sum_{i_1 < \cdots < i_{\lambda_0}}  \sum_{\substack{I^{\alpha_1}_1 < \cdots < I^{\alpha_1}_{\lambda_1} \\ \cdots \\ I^{\alpha_t}_1 < \cdots < I^{\alpha_t}_{\lambda_t} \\ I^{\alpha_{i}}_{j_{i}} \in S_{k}}}\\
		&\hspace{15mm}A^{I^{\alpha_1}_1 I^{\alpha_1}_2 \ldots I^{\alpha_t}_{\lambda_t}}_{i_1 \ldots i_{\lambda_0}} dx_{i_1} \wedge \cdots \wedge dx_{i_{\lambda_0}} \wedge dp^{\alpha_1}_{I^{\alpha_1}_1} \wedge dp^{\alpha_1}_{I^{\alpha_1}_2} \wedge \cdots \wedge dp^{\alpha_t}_{I^{\alpha_t}_{\lambda_t}}\mod \calC^{k}.
	\end{flalign*}
	Then the submanifold $S$ of $J^{k}(n,m)$ given by
	\[
		S=\{(x_i, z^{\alpha}(x_1,\ldots,x_n),z^{\alpha}_I(x_1,\ldots,x_n))\ |\ 1\leq i \leq n,\ 1 \leq \alpha \leq m,\ I \in \Sigma_{k}\}
	\]
	is an integral manifold of $\ \calI$ if and only if $z^{\alpha}(x_1,\ldots,x_n)\ (1 \leq \alpha \leq m)$ is a solution of the following GMAE:	
	\renewcommand{\arraystretch}{2.5}
	\begin{flalign}
		\begin{array}{l}\label{GMAS}
			\displaystyle
			\sum^m_{t=1} \sum^{l}_{\lambda_{0}=0}\sum_{\lambda_1+ \cdots + \lambda_t=l-\lambda_{0}}  \sum_{\alpha_1 < \cdots < \alpha_t} \sum_{\substack{i_1 < \cdots < i_{\lambda_0} \\ j^{\alpha_1}_1 <j^{\alpha_1}_2< \cdots < j^{\alpha_t}_{\lambda_t}  \\  \{ i_1, \ldots , j^{\alpha_t}_{\lambda_t} \} = \{ \nu_1 ,\ldots, \nu_l \}}}\sum_{\substack{I^{\alpha_1}_1 < \cdots < I^{\alpha_1}_{\lambda_1} \\ \cdots \\ I^{\alpha_t}_1 < \cdots < I^{\alpha_t}_{\lambda_t}\\ I^{\alpha_{i}}_{j_{i}} \in S_{k}}} \\
		\hspace{10mm}A^{I^{\alpha_1}_1 I^{\alpha_1}_2 \ldots I^{\alpha_t}_{\lambda_t}}_{i_1 \ldots i_{\lambda_0} } \Delta^{I^{\alpha_1}_1 \ldots I^{\alpha_1}_{\lambda_1} \ldots I^{\alpha_t}_1 \ldots I^{\alpha_t}_{\lambda_t}}_{j^{\alpha_1}_1 \ldots j^{\alpha_1}_{\lambda_1} \ldots  j^{\alpha_t}_1 \ldots j^{\alpha_t}_{\lambda_t} }(\nu_1,\ldots,\nu_l\ ; z^{\alpha_1}, \ldots, z^{\alpha_t}) =0\\
		\hspace{65mm} (1 \leq \nu_1 < \cdots < \nu_l \leq n).
		\end{array}
	\end{flalign}
	\renewcommand{\arraystretch}{1}
\end{prop}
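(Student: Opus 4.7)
The plan is to extend the proof of Proposition \ref{GMAS-GMAE} to accommodate $m$ unknown functions; the only new element is the bookkeeping across the index $\alpha$. Let $\iota \colon S \hookrightarrow J^k(n,m)$ denote the inclusion with $x_1, \ldots, x_n$ as local coordinates on $S$. Since $\calC^k \subset \calI$, the submanifold $S$ is automatically an integral manifold of $\calC^k$, so by Remark \ref{rem:canonical-system} we have $p^\alpha_I(x) = z^\alpha_I(x)$ for every $\alpha$ and every $I \in \Sigma_k$, and hence $\iota^\ast dp^\alpha_I = \sum_{j=1}^n z^\alpha_{Ij}\, dx_j$. Because $\iota^\ast \omega = 0$ for every $\omega \in \calC^k$, only the class of $\Psi$ modulo $\calC^k$ is relevant when computing $\iota^\ast \Psi$.

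Next I would substitute these pullbacks into the modular expression for $\Psi$ given in the statement and expand each wedge $dp^{\alpha_1}_{I^{\alpha_1}_1} \wedge \cdots \wedge dp^{\alpha_t}_{I^{\alpha_t}_{\lambda_t}}$. By multilinearity and skew-symmetry of the wedge product, summing the resulting monomials in $(k+1)$st partials against the wedges $dx_{j^{\alpha_1}_1} \wedge \cdots \wedge dx_{j^{\alpha_t}_{\lambda_t}}$ with $j^{\alpha_1}_1 < \cdots < j^{\alpha_t}_{\lambda_t}$ produces exactly the block-determinantal minor $H$ of (\ref{minor}) as coefficient, with rows indexed by the $j$'s and columns grouped by $\alpha$. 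Reordering the full wedge $dx_{i_1} \wedge \cdots \wedge dx_{i_{\lambda_0}} \wedge dx_{j^{\alpha_1}_1} \wedge \cdots \wedge dx_{j^{\alpha_t}_{\lambda_t}}$ into the lexicographic form $dx_{\nu_1} \wedge \cdots \wedge dx_{\nu_l}$ produces the permutation sign that combines with $H$ to yield exactly $\Delta$ as defined in (\ref{GMAE0}). Assembling everything, $\iota^\ast \Psi$ becomes the sum over $\nu_1 < \cdots < \nu_l$ of the left-hand side of (\ref{GMAS}) multiplied by $dx_{\nu_1} \wedge \cdots \wedge dx_{\nu_l}$.

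Since the forms $dx_{\nu_1} \wedge \cdots \wedge dx_{\nu_l}$ with $1 \leq \nu_1 < \cdots < \nu_l \leq n$ are linearly independent on $S$, the condition $\iota^\ast \Psi = 0$ is equivalent to the vanishing of every coefficient, which is precisely the GMAE system (\ref{GMAS}). The converse direction is obtained by reading the same computation backwards: given any $z^\alpha(x)$ solving (\ref{GMAS}), the associated submanifold $S$ is integral for $\calC^k$ by Remark \ref{rem:canonical-system}, and the identity above then forces $\iota^\ast \Psi = 0$. The main technical obstacle will be verifying cleanly that antisymmetrizing the $t$-fold wedge product of the first-order forms $\sum_j z^{\alpha_s}_{Ij}\, dx_j$ in groups of $\lambda_s$ columns (one group per $\alpha_s$) produces the precise block-minor $H$ together with the correct sign; once this identification is settled, the argument of Proposition \ref{GMAS-GMAE} transfers essentially verbatim, with the extra summations over $\alpha_1 < \cdots < \alpha_t$ and the compositions $\lambda_1 + \cdots + \lambda_t = l - \lambda_0$ inserted in the obvious way.
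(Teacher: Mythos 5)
Your proposal is correct and follows essentially the same route as the paper's own proof: pull back $\Psi$ along $\iota$ using $p^{\alpha}_{I}=z^{\alpha}_{I}$ from Remark~\ref{rem:canonical-system}, expand the wedges of $\iota^{\ast}dp^{\alpha}_{I}=\sum_{j}z^{\alpha}_{Ij}dx_{j}$ to recover the block minors $H$ and the reordering signs that assemble into $\Delta$, and conclude by linear independence of the $dx_{\nu_{1}}\wedge\cdots\wedge dx_{\nu_{l}}$, with the converse obtained by running the same identity backwards. The ``technical obstacle'' you flag is exactly the displayed computation (\ref{calc2}) in the paper, so nothing further is missing.
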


\begin{proof}
	 Let $\iota \colon S \hookrightarrow J^{k}(n,m)$ be an $n$-dimensional integral manifold of $\calI$ such that $x_{1},\ldots,x_{n}$ are coordinates of $S$. 
	Then $S$ is an integral manifold of $\calC^{k}$, and we have $p^{\alpha}_{I}(x_{1},\ldots,x_{n})=z^{\alpha}_{I}(x_{1},\ldots,x_{n})\ (1 \leq \alpha \leq m,\ I \in \Sigma_{k})$ by Remark~\ref{rem:canonical-system}.
	Therefore 
\begin{flalign*}
	\iota^{\ast}\Psi &= \sum^m_{t=1} \sum^{l}_{\lambda_{0}=0} \sum_{\lambda_1+ \cdots + \lambda_t=l-\lambda_{0}} \sum_{\alpha_1 < \cdots < \alpha_t}  \sum_{i_1 < \cdots < i_{\lambda_0}} \sum_{\substack{I^{\alpha_1}_1 < \cdots < I^{\alpha_1}_{\lambda_1} \\ \cdots \\ I^{\alpha_t}_1 < \cdots < I^{\alpha_t}_{\lambda_t}\\ I^{\alpha_{i}}_{j_{i}} \in S_{k}}} \notag\\
	& \hspace{5mm}A^{I^{\alpha_1}_1 I^{\alpha_1}_2 \ldots I^{\alpha_t}_{\lambda_t}}_{i_1 \ldots i_{\lambda_0} } dx_{i_1} \wedge \cdots \wedge dx_{i_{\lambda_0}} \wedge dz^{\alpha_1}_{I^{\alpha_1}_1} \wedge dz^{\alpha_1}_{I^{\alpha_1}_2} \wedge \cdots \wedge dz^{\alpha_t}_{I^{\alpha_t}_{\lambda_t}}\hphantom{aaaaaaaaaaaaaaaaaaaaaaaa}
	\end{flalign*}
	\begin{flalign}
	\hphantom{\iota^{\ast}\Psi}& =\sum^m_{t=1} \sum^{l}_{\lambda_{0}=0} \sum_{\lambda_1+ \cdots + \lambda_t=l-\lambda_{0}} \sum_{\alpha_1 < \cdots < \alpha_t} \sum_{i_1 < \cdots < i_{\lambda_0}}  \sum_{\substack{I^{\alpha_1}_1 < \cdots < I^{\alpha_1}_{\lambda_1} \\ \cdots \\ I^{\alpha_t}_1 < \cdots < I^{\alpha_t}_{\lambda_t}\\ I^{\alpha_{i}}_{j_{i}} \in S_{k}}} \sum_{j^{\alpha_{1}}_{1},j^{\alpha_{1}}_{2},\ldots,j^{\alpha_{t}}_{\lambda_{t}}} \notag \\
	&\begin{array}{l}
		\hspace{20mm}A^{I^{\alpha_1}_1 I^{\alpha_1}_2 \ldots I^{\alpha_t}_{\lambda_t}}_{i_1 \ldots i_{\lambda_0} } z^{\alpha_{1}}_{I^{\alpha_{1}}_{1}j^{\alpha_{1}}_{1}}  z^{\alpha_{1}}_{I^{\alpha_{1}}_{2}j^{\alpha_{1}}_{2}}\cdots  z^{\alpha_{t}}_{I^{\alpha_{t}}_{\lambda_{t}}j^{\alpha_{t}}_{\lambda_{t}}} \\
		\hspace{30mm}dx_{i_1} \wedge \cdots \wedge dx_{i_{\lambda_0}} \wedge dx_{j^{\alpha_{1}}_{1}}\wedge dx_{j^{\alpha_{1}}_{2}}\wedge \cdots \wedge dx_{j^{\alpha_{t}}_{\lambda_{t}}} 
		\end{array} \notag \\
	&=\sum^m_{t=1} \sum^{l}_{\lambda_{0}=0} \sum_{\lambda_1+ \cdots + \lambda_t=l-\lambda_{0}} \sum_{\alpha_1 < \cdots < \alpha_t}\sum_{i_1 < \cdots < i_{\lambda_0}}  \sum_{\substack{I^{\alpha_1}_1 < \cdots < I^{\alpha_1}_{\lambda_1} \\ \cdots \\ I^{\alpha_t}_1 < \cdots < I^{\alpha_t}_{\lambda_t}\\ I^{\alpha_{i}}_{j_{i}} \in S_{k}}} \sum_{j^{\alpha_{1}}_{1}<j^{\alpha_{1}}_{2}<\cdots<\j^{\alpha_{t}}_{\lambda_{t}}} \notag \\
	& \begin{array}{l}
		\hspace{20mm}A^{I^{\alpha_1}_1 I^{\alpha_1}_2 \ldots I^{\alpha_t}_{\lambda_t}}_{i_1 \ldots i_{\lambda_0} } 
	H^{I^{\alpha_1}_1 \ldots I^{\alpha_1}_{\lambda_1} \ldots I^{\alpha_t}_1 \ldots I^{\alpha_t}_{\lambda_t}}_{j^{\alpha_1}_1 \ldots j^{\alpha_1}_{\lambda_1} \ldots  j^{\alpha_t}_1 \ldots j^{\alpha_t}_{\lambda_t} }(z^{\alpha_{1}},\ldots,z^{\alpha_{t}})\\
	\hspace{30mm}dx_{i_1} \wedge \cdots \wedge dx_{i_{\lambda_0}} \wedge dx_{j^{\alpha_{1}}_{1}}\wedge dx_{j^{\alpha_{1}}_{2}}\wedge \cdots \wedge dx_{j^{\alpha_{t}}_{\lambda_{t}}}
	\end{array} \notag \\
	& \label{calc2} \begin{aligned}
		&=\sum_{\nu_{1}<\cdots<\nu_{l}}\sum^m_{t=1} \sum^{l}_{\lambda_{0}=0} \sum_{\lambda_1+ \cdots + \lambda_t=l-\lambda_{0}} \sum_{\alpha_1 < \cdots < \alpha_t} \sum_{\substack{i_1 < \cdots < i_{\lambda_0} \\ j^{\alpha_{1}}_{1}<j^{\alpha_{1}}_{2}<\cdots<\j^{\alpha_{t}}_{\lambda_{t}} \\ \{ i_1, \ldots , j^{\alpha_t}_{\lambda_t} \} = \{ \nu_1 ,\ldots, \nu_l \}}} \sum_{\substack{I^{\alpha_1}_1 < \cdots < I^{\alpha_1}_{\lambda_1} \\ \cdots \\ I^{\alpha_t}_1 < \cdots < I^{\alpha_t}_{\lambda_t}\\ I^{\alpha_{i}}_{j_{i}} \in S_{k}}} \\
		&A^{I^{\alpha_1}_1 I^{\alpha_1}_2 \ldots I^{\alpha_t}_{\lambda_t}} _{i_1 \ldots i_{\lambda_0} }
	\Delta^{I^{\alpha_1}_1 \ldots I^{\alpha_1}_{\lambda_1} \ldots I^{\alpha_t}_1 \ldots I^{\alpha_t}_{\lambda_t}}_{j^{\alpha_1}_1 \ldots j^{\alpha_1}_{\lambda_1} \ldots  j^{\alpha_t}_1 \ldots j^{\alpha_t}_{\lambda_t} }(\nu_{1},\ldots,\nu_{l};z^{\alpha_{1}},\ldots,z^{\alpha_{t}}) dx_{\nu_{1}}\wedge \cdots \wedge dx_{\nu_{l}}.
	\end{aligned}
\end{flalign}

Furthermore, $dx_{\nu_{1}}\wedge \cdots \wedge dx_{\nu_{l}}\ (1 \leq \nu_{1} < \cdots < \nu_{l} \leq n)$ are linearly independent on $S$ and hence, by using $\iota^{\ast}\Psi=0$, we obtain
\begin{flalign*}
	\renewcommand{\arraystretch}{2}
	\begin{array}{l}
			\displaystyle
			\sum^m_{t=1} \sum^{l}_{\lambda_{0}=0}\sum_{\lambda_1+ \cdots + \lambda_t=l-\lambda_{0}}  \sum_{\alpha_1 < \cdots < \alpha_t} \sum_{\substack{i_1 < \cdots < i_{\lambda_0} \\ j^{\alpha_1}_1 <j^{\alpha_1}_2< \cdots < j^{\alpha_t}_{\lambda_t}  \\  \{ i_1, \ldots , j^{\alpha_t}_{\lambda_t} \} = \{ \nu_1 ,\ldots, \nu_l \}}}\sum_{\substack{I^{\alpha_1}_1 < \cdots < I^{\alpha_1}_{\lambda_1} \\ \cdots \\ I^{\alpha_t}_1 < \cdots < I^{\alpha_t}_{\lambda_t}\\ I^{\alpha_{i}}_{j_{i}} \in S_{k}}} \\
		\hspace{15mm}A^{I^{\alpha_1}_1 I^{\alpha_1}_2 \ldots I^{\alpha_t}_{\lambda_t}}_{i_1 \ldots i_{\lambda_0} } \Delta^{I^{\alpha_1}_1 \ldots I^{\alpha_1}_{\lambda_1} \ldots I^{\alpha_t}_1 \ldots I^{\alpha_t}_{\lambda_t}}_{j^{\alpha_1}_1 \ldots j^{\alpha_1}_{\lambda_1} \ldots  j^{\alpha_t}_1 \ldots j^{\alpha_t}_{\lambda_t} }(\nu_1,\ldots,\nu_l\ ; z^{\alpha_{1}},\ldots,z^{\alpha_{t}}) =0\\
		\hspace{70mm} (1 \leq \nu_1 < \cdots < \nu_l \leq n). 
	\end{array}
	\renewcommand{\arraystretch}{1}
\end{flalign*}
The converse is proved by substituting a solution of the GMAE for $z$ in (\ref{calc2}).
\end{proof}

\begin{prop}[the case of $r=1$]\label{GMAE-GMAS2}
	For an arbitrary GMAE in the case $r=1$, namely
	\renewcommand{\arraystretch}{2.5}
	\begin{flalign}
		\begin{array}{l}\label{GMAE-r=1}
			\displaystyle
			\sum^m_{t=1} \sum^{l}_{\lambda_{0}=0}\sum_{\lambda_1+ \cdots + \lambda_t=l-\lambda_{0}}  \sum_{\alpha_1 < \cdots < \alpha_t} \sum_{\substack{i_1 < \cdots < i_{\lambda_0} \\ j^{\alpha_1}_1 <j^{\alpha_1}_2< \cdots < j^{\alpha_t}_{\lambda_t}  \\  \{ i_1, \ldots , j^{\alpha_t}_{\lambda_t} \} = \{ \nu_1 ,\ldots, \nu_l \}}}\sum_{\substack{I^{\alpha_1}_1 < \cdots < I^{\alpha_1}_{\lambda_1} \\ \cdots \\ I^{\alpha_t}_1 < \cdots < I^{\alpha_t}_{\lambda_t}\\ I^{\alpha_{i}}_{j_{i}} \in S_{k}}} \\
		\hspace{10mm}A^{I^{\alpha_1}_1 I^{\alpha_1}_2 \ldots I^{\alpha_t}_{\lambda_t}}_{i_1 \ldots i_{\lambda_0} } \Delta^{I^{\alpha_1}_1 \ldots I^{\alpha_1}_{\lambda_1} \ldots I^{\alpha_t}_1 \ldots I^{\alpha_t}_{\lambda_t}}_{j^{\alpha_1}_1 \ldots j^{\alpha_1}_{\lambda_1} \ldots  j^{\alpha_t}_1 \ldots j^{\alpha_t}_{\lambda_t} }(\nu_1,\ldots,\nu_l\ ; z^{\alpha_1}, \ldots, z^{\alpha_t}) =0\\
		\hspace{65mm} (1 \leq \nu_1 < \cdots < \nu_l \leq n), 
		\end{array}
			\renewcommand{\arraystretch}{1}
	\end{flalign}
	a function $z^{\alpha}(x_{1},\ldots,x_{n})\ (1 \leq \alpha \leq m)$ is a solution of (\ref{GMAE-r=1}) if and only if the submanifold $S$ of $J^{k}(n,m)$ given by
	\[
		S=\{(x_i, z^{\alpha}(x_1,\ldots,x_n),z^{\alpha}_I(x_1,\ldots,x_n))\ |\ 1\leq i \leq n,\ 1 \leq \alpha \leq m,\ I \in \Sigma_{k}\}
	\] 
	is an integral manifold of the GMAS generated by the canonical system $\calC^{k}$ on $J^{k}(n,m)$ and the following $l$-form $\Psi$ on $J^{k}(n,m)$:
	\begin{flalign*}
		&\Psi \equiv \sum^m_{t=0} \sum^{l}_{\lambda_{0}=0} \sum_{\lambda_1+ \cdots + \lambda_t=l-\lambda_{0}} \sum_{\alpha_1 < \cdots < \alpha_t}\sum_{i_1 < \cdots < i_{\lambda_0}}  \sum_{\substack{I^{\alpha_1}_1 < \cdots < I^{\alpha_1}_{\lambda_1} \\ \cdots \\ I^{\alpha_t}_1 < \cdots < I^{\alpha_t}_{\lambda_t} \\ I^{\alpha_{i}}_{j_{i}} \in S_{k}}}\\
		&\hspace{15mm}A^{I^{\alpha_1}_1 I^{\alpha_1}_2 \ldots I^{\alpha_t}_{\lambda_t}}_{i_1 \ldots i_{\lambda_0}} dx_{i_1} \wedge \cdots \wedge dx_{i_{\lambda_0}} \wedge dp^{\alpha_1}_{I^{\alpha_1}_1} \wedge dp^{\alpha_1}_{I^{\alpha_1}_2} \wedge \cdots \wedge dp^{\alpha_t}_{I^{\alpha_t}_{\lambda_t}}\mod \calC^{k},
	\end{flalign*}
	where
		$(x_i,z^{\alpha},p^{\alpha}_{I})\ (1 \leq i \leq n,\ 1 \leq \alpha \leq m,\ I \in \Sigma_{k})$ be a canonical coordinate system of $J^{k}(n,m)$.
	\end{prop}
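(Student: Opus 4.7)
The plan is to adapt the calculation from Proposition~\ref{GMAS-GMAE2} and essentially run it in reverse. Given the GMAE (\ref{GMAE-r=1}), define $\Psi$ by the stated formula and let $S$ be the submanifold built from a candidate function $z^{\alpha}(x_1,\ldots,x_n)$. Because the coordinates $(x_1,\ldots,x_n)$ parametrize $S$ and $p^{\alpha}_I(x_1,\ldots,x_n) = z^{\alpha}_I(x_1,\ldots,x_n)$ for all $I \in \Sigma_k$, Remark~\ref{rem:canonical-system} immediately gives that $S$ is an integral manifold of $\calC^k$; in particular every generator of $\calC^k$ (and its exterior derivative) pulls back to zero on $S$. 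So the only remaining condition for $S$ to be an integral manifold of $\calI = \{\calC^k, \Psi\}_{\mathrm{diff}}$ is $\iota^{\ast}\Psi = 0$ (closure under $d$ is automatic once $\iota^{\ast}\Psi = 0$, since $\iota^{\ast} d\Psi = d(\iota^{\ast}\Psi)$).

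Next I would compute $\iota^{\ast}\Psi$ by the same chain of equalities that appears in the proof of Proposition~\ref{GMAS-GMAE2}: the ``mod $\calC^k$'' terms drop out under $\iota^{\ast}$, each $dp^{\alpha_s}_{I^{\alpha_s}_r}$ becomes $dz^{\alpha_s}_{I^{\alpha_s}_r} = \sum_j z^{\alpha_s}_{I^{\alpha_s}_r j}\, dx_j$, expanding the resulting wedge products and regrouping by the set $\{\nu_1,\ldots,\nu_l\}$ of distinct $x$-indices produces the minors $H^{\ldots}_{\ldots}$ together with their sign factors, and one arrives at the key identity
\[
\iota^{\ast}\Psi = \sum_{\nu_1 < \cdots < \nu_l} F_{\nu_1\cdots\nu_l}(z^1,\ldots,z^m)\, dx_{\nu_1} \wedge \cdots \wedge dx_{\nu_l},
\]
where $F_{\nu_1\cdots\nu_l}$ is precisely the left-hand side of the $(\nu_1,\ldots,\nu_l)$-equation of the GMAE (\ref{GMAE-r=1}). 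This is nothing but equation (\ref{calc2}) read with the roles of the two directions interchanged.

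From that identity both directions follow cleanly. For the ``only if'' direction, assume $z^{\alpha}$ solves the GMAE; then every $F_{\nu_1\cdots\nu_l}$ vanishes, hence $\iota^{\ast}\Psi = 0$ and $S$ is an integral manifold of $\calI$. For the ``if'' direction, assume $S$ is an integral manifold, so that $\iota^{\ast}\Psi = 0$; since the $l$-forms $dx_{\nu_1} \wedge \cdots \wedge dx_{\nu_l}$ with $1 \leq \nu_1 < \cdots < \nu_l \leq n$ are pointwise linearly independent on $S$, each coefficient $F_{\nu_1\cdots\nu_l}$ must vanish identically, which is exactly the system (\ref{GMAE-r=1}).

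The main obstacle is purely notational rather than conceptual: one must keep track of the multi-indices $I^{\alpha_s}_r \in S_k$, the auxiliary indices $j^{\alpha_s}_r$, and the sign factor arising from sorting $\{i_1,\ldots,i_{\lambda_0}, j^{\alpha_1}_1,\ldots,j^{\alpha_t}_{\lambda_t}\}$ into increasing order $\nu_1 < \cdots < \nu_l$. These are precisely the $\sgn$-factors folded into the definition of $\Delta^{\cdots}_{\cdots}$, so if one reuses the combinatorial identity established in the computation leading to (\ref{calc2}), no new bookkeeping is required and the proof reduces to that single appeal together with the linear independence argument.
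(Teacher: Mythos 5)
Your proposal is correct and follows essentially the same route as the paper: the paper's proof of this proposition is literally ``similar to that of Proposition~\ref{GMAS-GMAE2},'' i.e.\ the same pullback computation $\iota^{\ast}\Psi=\sum_{\nu_1<\cdots<\nu_l}F_{\nu_1\cdots\nu_l}\,dx_{\nu_1}\wedge\cdots\wedge dx_{\nu_l}$ combined with the linear independence of the $dx_{\nu_1}\wedge\cdots\wedge dx_{\nu_l}$, which is exactly what you carry out. Your additional remarks (that $S$ is automatically an integral manifold of $\calC^{k}$ by Remark~\ref{rem:canonical-system}, and that $\iota^{\ast}d\Psi=d(\iota^{\ast}\Psi)$ disposes of the closure condition) only make explicit what the paper leaves implicit.
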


\begin{proof}
	The proof is similar to that of Proposition~\ref{GMAS-GMAE2}.
\end{proof}

\begin{rem}
Note that the above equation (\ref{GMAS}) can be written as
\begin{flalign*}
	&\sum^m_{t=0} \sum_{\lambda_0+\lambda_1+ \cdots + \lambda_t=l}  \sum_{\alpha_1 < \cdots < \alpha_t} \sum_{\substack{i_1 < \cdots < i_{\lambda_0} \\ j^{\alpha_1}_1 < \cdots < j^{\alpha_1}_{\lambda_1} \\ \cdots \\ j^{\alpha_t}_1 < \cdots < j^{\alpha_t}_{\lambda_t}  \\  \{ i_1, \ldots , j^{\alpha_t}_{\lambda_t} \} = \{ \nu_1 ,\ldots, \nu_l \}}}\sum_{\substack{I^{\alpha_1}_1 < \cdots < I^{\alpha_1}_{\lambda_1} \\ \cdots \\ I^{\alpha_t}_1 < \cdots < I^{\alpha_t}_{\lambda_t}}} \\
	&\hspace{10mm}A^{ I^{\alpha_1}_1 I^{\alpha_1}_2 \ldots I^{\alpha_t}_{\lambda_t}}_{i_1 \ldots i_{\lambda_0}}
	\sgn \left(
	\begin{array}{c}
		i_1,\ldots, i_{\lambda_0}, j^{\alpha_1}_1, j^{\alpha_1}_2, \ldots , j^{\alpha_t}_{\lambda_{t}} \\
		\nu_1 ,\ldots, \nu_{\lambda_0},\nu_{\lambda_0+1},\nu_{\lambda_0+2} ,\ldots, \nu_{l} 
	\end{array}
	\right)\\
	&\hspace{20mm}H^{I^{\alpha_1}_1 \ldots I^{\alpha_1}_{\lambda_1}}_{j^{\alpha_1}_1 \ldots j^{\alpha_1}_{\lambda_1} }(z^{\alpha_1}) \cdots H^{I^{\alpha_t}_1 \ldots I^{\alpha_t}_{\lambda_t}}_{j^{\alpha_t}_1 \ldots j^{\alpha_t}_{\lambda_t} }(z^{\alpha_t})=0\hspace{4mm}(1 \leq \nu_1 < \cdots < \nu_l \leq n). 
\end{flalign*}
\end{rem}

Finally, we prove Theorem~\ref{thm:corresponding} in the general case.

\begin{proof}[Proof of Theorem~\ref{thm:corresponding}]
	Let $\calI=\{\calC^{k},\Psi_{\mu}\ |\ 1 \leq \mu \leq r\}_{\mathrm{diff}}$ be a GMAS on $J^{k}(n,m)$. The first part of Theorem~\ref{thm:corresponding} is proved by applying Proposition~\ref{GMAS-GMAE2} for each $\Psi_{\mu}\ (1 \leq \mu \leq r)$. Conversely, for an arbitrary GMAE, the second part of Theorem~\ref{thm:corresponding} is proved by applying Proposition~\ref{GMAE-GMAS2}, which completes the proof.
\end{proof}

\begin{rem}
	From the view point of geometry of jet spaces with differential forms $\Psi_{1},\ldots,\Psi_{r}$ and independence condition $dx_{1}\wedge \cdots  \wedge dx_{n}$,  Theorem~\ref{thm:corresponding} give a local necessary and sufficient condition for a submanifold in jet spaces to be an integral manifold of a GMAS.
\end{rem}

\section{Some examples of GMAS and GMAE}

In this section, we introduce some examples of GMAEs and GMASs. 
We show that the most general nonlinear third order equation which is completely exceptional (\cite{donate-valenti}) and the KdV equation are examples of GMAEs, moreover, the Cauchy--Riemann equations (which are a system of two partial differential equations) are also an example of GMAEs.
\begin{ex}[the case of $n=2,m=1,k=1,r=1,l=2$]
	Let $(x_{1},x_{2},z,p_{1},p_{2})$ be a canonical coordinate system of $J^{1}(2,1)$. In general, the GMAS $\calI$ in this case is generated by the canonical system $\calC^{1}$ on $J^{1}(2,1)$ and 
	\begin{flalign*}
		\Psi &\equiv \sum^2_{\lambda=0}\sum_{i_1 < \ldots <i_\lambda} \sum_{\substack{I_1 < \ldots < I_{2-\lambda} \\ I_j \in S_1}}\\
		&\hspace{20mm}A^{I_1 \ldots I_{2 - \lambda}}_{i_1 \ldots i_\lambda } dx_{i_1}\wedge \cdots \wedge dx_{i_\lambda} \wedge dp_{I_1} \wedge \cdots \wedge dp_{I_{2-\lambda}} \mod \calC^{1}\\
		&\equiv A_{12}dx_{1}\wedge dx_{2} + A^{1}_{1}dx_{1}\wedge dp_{1} + A^{2}_{1}dx_{1}\wedge dp_{2}\\
		&\hspace{20mm}+ A^{1}_{2}dx_{2}\wedge dp_{1} + A^{2}_{2}dx_{2} \wedge dp_{2} + A^{12}dp_{1}\wedge dp_{2} \mod \calC^{1},\\
	\end{flalign*}
	where $A_{12},A^{12}$ and $A^{i}_{j}\ (i,j \in \{1,2\})$ are functions of $x_{1}, x_{2}, z, p_{1}$ and $p_{2}$. By Proposition~\ref{GMAS-GMAE}, the GMAE corresponding to the GMAS $\calI$ is the following form:
	\begin{flalign*}
		&\sum_{\lambda=0}^{2} \sum_{\substack{i_1 < \cdots < i_\lambda \\ j_1 < \cdots <j_{2-\lambda}  \\ \{i_1,\ldots,j_{2-\lambda}\} = \{\nu_1,\nu_2\}}} \sum_{\substack{I_1 < \cdots < I_{2-\lambda} \\ I_j \in S_1}} A^{I_1 \ldots I_{2-\lambda}}_{i_1 \ldots i_\lambda} \Delta_{j_1 \ldots j_{2-\lambda}}^{I_1 \ldots I_{2-\lambda}} (\nu_1 ,\nu_2\ ; z) =0\\
		&\hspace{90mm} (1 \leq \nu_1 < \nu_2 \leq 2),
	\end{flalign*}
	that is,
	\begin{flalign*}
		&A_{12}+A^{1}_{1}z_{x_{1}x_{2}}+A^{2}_{1}z_{x_{2}x_{2}} -A^{1}_{2}z_{x_{1}x_{1}}-A^{2}_{2}z_{x_{2}x_{1}}\\
		&\hspace{54.5mm}+A^{12}(z_{x_{1}x_{1}}z_{x_{2}x_{2}}-z_{x_{2}x_{1}}z_{x_{1}x_{2}})=0.
	\end{flalign*}
	Since $z_{x_{1}x_{2}}=z_{x_{2}x_{1}}$ by putting
	\[
		A:=-A^{1}_{2},\ B:=(A^{1}_{1}-A^{2}_{2})/2,\ C:=A^{2}_{1},\ D:=A_{12},\ E:=A_{12},
	\]
	then we have
	\begin{flalign}\label{sec4:ex-MAE}
		Az_{x_{1}x_{1}} +2Bz_{x_{1}x_{2}} +Cz_{x_{2}x_{2}} + D + E(z_{x_{1}x_{1}}z_{x_{2}x_{2}}-z^{2}_{x_{1}x_{2}})=0.
	\end{flalign}
		The equation (\ref{sec4:ex-MAE}) is the classical Monge--Amp\`ere equation. Therefore, the GMAS in this case corresponds to the classical Monge--Amp\`ere equation.
	\end{ex}

\begin{ex}[the case of $n=2,m=1,k=1,r=1,l=1$]
		Let $(x_{1},x_{2},z,p_{1},p_{2})$ be a canonical coordinate system of $J^{1}(2,1)$. In general, the GMAS $\calI$ in this case is generated by the canonical system $\calC^{1}$ on $J^{1}(2,1)$ and 
		\begin{flalign*}
		\Psi \equiv A^{1}dp_{1} + A^{2}dp_{2} + A_{1}dx_{1} + A_{2}dx_{2} \mod \calC^{1},\\
	\end{flalign*}
where $A^{1},A^{2},A_{1}$ and $A_{2}$ are functions of $x_{1},x_{2},z,p_{1}$ and $p_{2}$.
By Proposition~\ref{GMAS-GMAE}, the GMAE corresponding to the GMAS $\calI$ is the following equation introduced by Example~\ref{ex:overdetermined}:
\begin{flalign}\label{GMAE1}
	\left\{
		\begin{array}{l}
			A^{1}z_{x_{1}x_{1}}+A^{2}z_{x_{1}x_{2}}+A_{1}=0\\
			A^{1}z_{x_{1}x_{2}}+A^{2}z_{x_{2}x_{2}}+A_{2}=0\\
		\end{array}
	\right. .
\end{flalign}
	The equation (\ref{GMAE1}) is the simplest GMAE which is not the classical Monge--Amp\`ere equation.
\end{ex}

\begin{ex}[the case of $n=2,m=1,k=2,r=1,l=2$]
Let 
$(x_{1},x_{2},z,p_{1},p_{2},$ $p_{11},p_{12},p_{22})$%$$
be a canonical coordinate system of $J^{2}(2,1)$. In general, the GMAS $\calI$ in this case is generated by the canonical system $\calC^{2}$ on $J^{2}(2,1)$ and 	\begin{flalign*}
		\Psi \equiv Adx_{1} \wedge dx_{2} &+B_{1}dx_{2}\wedge dp_{11} +B_{2} dx_{2} \wedge dp_{12} + B_{3}dx_{2} \wedge dp_{22} \\
		&+B_{4}dx_{1} \wedge dp_{11} +B_{5} dx_{1} \wedge dp_{12} + B_{6}dx_{1} \wedge dp_{22} \\
		& + C_{1}dp_{11}\wedge dp_{12} + C_{2}dp_{11} \wedge dp_{22} + C_{3} dp_{12} \wedge dp_{22} \mod \calC^{2},
	\end{flalign*}
where $A,B_{i}$ and $C_{j}\ (i \in \{1,2,\ldots,6\},\ j\in \{1,2,3\})$ are functions of $x_{1},x_{2},z,p_{1},$
$p_{2},p_{11},p_{12}$ and $p_{22}$.
By Proposition~\ref{GMAS-GMAE}, the GMAE corresponding to the GMAS $\calI$ is the following form:
\begin{flalign}\label{GMAE2}
		\renewcommand{\arraystretch}{2}
		\begin{array}{l}
		A-B_{1}z_{x_{1}x_{1}x_{1}}-B_{2}z_{x_{1}x_{2}x_{1}}-B_{3}z_{x_{2}x_{2}x_{1}}\\
		\renewcommand{\arraystretch}{1}
		\hspace{5mm}+B_{4}z_{x_{1}x_{1}x_{2}}+B_{5}z_{x_{1}x_{2}x_{2}}+B_{6}z_{x_{2}x_{2}x_{2}}
		+C_{1}
		\left|
		\begin{array}{cc}
			z_{x_{1}x_{1}x_{1}} & z_{x_{1}x_{2}x_{1}} \\
			z_{x_{1}x_{1}x_{2}} & z_{x_{1}x_{2}x_{2}} \\
		\end{array}
		\right|\\
		\renewcommand{\arraystretch}{1}
		\hspace{20mm}+C_{2}\left|
		\begin{array}{cc}
			z_{x_{1}x_{1}x_{1}} & z_{x_{2}x_{2}x_{1}} \\
			z_{x_{1}x_{1}x_{2}} & z_{x_{2}x_{2}x_{2}} \\
		\end{array}
		\right|
		\renewcommand{\arraystretch}{1}
		+C_{3}\left|
		\begin{array}{cc}
			z_{x_{1}x_{2}x_{1}} & z_{x_{2}x_{2}x_{1}} \\
			z_{x_{1}x_{2}x_{2}} & z_{x_{2}x_{2}x_{2}} \\
		\end{array}
		\right| =0.
		\end{array}
\end{flalign}
This equation (\ref{GMAE2}) is known as the most general nonlinear third order equation which is completely exceptional (\cite{donate-valenti}).
In addition, this equation (\ref{GMAE2}) include the Korteweg-de Vries (KdV) equation. Actually, if we put $A=z_{x_{2}}+zz_{x_{1}}, B_{1}=-1$ and other functions equal to 0, then we get the following equation:
	\begin{flalign}\label{KdV}
		z_{x_{2}}+zz_{x_{1}}+z_{x_{1}x_{1}x_{1}}=0.
	\end{flalign}
	The above equation (\ref{KdV}) is the Korteweg-de Vries equation.
\end{ex}

\begin{ex}[the case of $n=2,m=2,k=0,r=2,l_{1}=2,l_{2}=2$]
Recall that $J^{0}(2,2)$ is simply a $(2+2)$ dimensional manifold equipped with the trivial EDS $\calC^{0}=\{0\}$. Let $(x_{1},x_{2},z^{1},z^{2})$ be a local coordinate system of $J^{0}(2,2)$. Then we consider the GMAS $\calI$ generated by the following 2-forms:
	\begin{flalign*}
		\Psi_{1} &:=A dx_{1}\wedge dx_{2} + A_{1} dx_{2} \wedge dz^{1} + A_{2} dx_{2} \wedge dz^{2}\\
		& \hspace{46mm}+ A_{3}dx_{1}\wedge dz^{1} +A_{4} dx_{1} \wedge dz^{2} + A_{5} dz^{1} \wedge dz^{2},\\
		\Psi_{2} &:=B dx_{1}\wedge dx_{2} + B_{1} dx_{2} \wedge dz^{1} + B_{2} dx_{2} \wedge dz^{2}\\
		&\hspace{46mm}+ B_{3}dx_{1}\wedge dz^{1} +B_{4} dx_{1} \wedge dz^{2} + B_{5} dz^{1} \wedge dz^{2},\
			\end{flalign*}
	where $A,B,A_{i}$ and $B_{i}\ (i \in \{1,2,\ldots,5\})$ are functions of $x_{1},x_{2},z^{1}$ and $z^{2}$. Then the GMAE corresponding to the GMAS $\calI$ is in the following:
	\begin{flalign}\label{GMAE3}
		\left\{
			\renewcommand{\arraystretch}{2.4}
			\begin{array}{l}
				A - A_{1}z^{1}_{x_{1}} - A_{2}z^{2}_{x_{1}} + A_{3}z^{1}_{x_{2}} + A_{4}z^{2}_{x_{2}} 
				+ A_{5} \left|
				\renewcommand{\arraystretch}{1}
					\begin{array}{cc}
						z^{1}_{x_{1}} & z^{2}_{x_{1}} \\
						z^{1}_{x_{2}} & z^{2}_{x_{2}}
					\end{array}
					\renewcommand{\arraystretch}{2}
				\right|=0 \\
				B - B_{1}z^{1}_{x_{1}} - B_{2}z^{2}_{x_{1}} + B_{3}z^{1}_{x_{2}} + B_{4}z^{2}_{x_{2}} 
				+ B_{5} \left|
				\renewcommand{\arraystretch}{1}
					\begin{array}{cc}
						z^{1}_{x_{1}} & z^{2}_{x_{1}} \\
						z^{1}_{x_{2}} & z^{2}_{x_{2}}
					\end{array}
				\right|=0 \\
			\end{array}
		\right. .
	\end{flalign}
	This equation (\ref{GMAE3}) include the Cauchy--Riemann equations. In fact, we put $A_{1}=-1, A_{4}=-1, B_{2}=-1, B_{3}=1$ and other functions equal to 0. Then we have the following equations:
	\begin{flalign}\label{cauchy-riemann}
		\left\{
			\begin{array}{l}
				z^{1}_{x_{1}}-z^{2}_{x_{2}}=0 \\
				z^{1}_{x_{2}}+z^{2}_{x_{1}}=0\\
			\end{array}
		\right. .
	\end{flalign}
	These equations (\ref{cauchy-riemann}) are the Cauchy--Riemann equations.
\end{ex}

\section*{Acknowledgement}
The authors would like to thank Akira Kubo, Takayuki Okuda, Yuichiro Taketomi, Hiroshi Tamaru, and Sorin Sabau for useful comments and suggestions.

\end{document}